\documentclass[a4paper,12pt]{amsart}
\usepackage[utf8]{inputenc}

\usepackage{amsmath,amssymb,amsthm}
\usepackage{bbm}
\usepackage{bm}
\usepackage{xspace}
\usepackage{csquotes}
\usepackage{standalone}

\usepackage[x11names,table]{xcolor}
\usepackage[naturalnames=false]{hyperref}
\usepackage{graphicx}
\graphicspath{ {img/} }
\usepackage{tikz}
\usetikzlibrary{calc}
\usetikzlibrary{cd}
\usepackage{csquotes}
\usepackage{booktabs}
\hypersetup{
	colorlinks = true,          
	urlcolor = DeepSkyBlue3, linkcolor = Chartreuse4, citecolor = DarkOrange2
}
\usepackage{fullpage}
\RequirePackage[backend=bibtex,isbn=false,url=false,maxbibnames=5,bibencoding=utf8]{biblatex}
\renewbibmacro{in:}{, }
\DeclareFieldFormat[misc]{title}{\mkbibquote{#1}}
\DeclareFieldFormat[article]{volume}{\mkbibbold{#1}}
\DeclareFieldFormat{url}{\textsc{url}: \href{#1}{#1}}
\DeclareFieldFormat{doi}{\textsc{doi}: \href{http://dx.doi.org/#1}{#1}}
\DeclareFieldFormat{eprint:arxiv}{arXiv:\href{https://arxiv.org/abs/#1}{#1}}
\addbibresource{biblio.bib}

\usepackage{mathbbol}

\newcommand\CC{{\mathbb C}}

\newcommand\FF{{\mathbb F}}

\newcommand\KK{{\mathbb K}}

\newcommand\RR{{\mathbb R}}

\newcommand\TT{{\mathbb T}}
\newcommand\ZZ{{\mathbb Z}}

\newcommand\cH{{\mathcal H}}

\DeclareMathSymbol{\0}{\mathord}{bbold}{`0}
\DeclareMathSymbol{\1}{\mathord}{bbold}{`1}

\newcommand\SetOf[2]{\left\{#1 \mid #2\right\}}
\newcommand\smallSetOf[2]{\{#1 \mid #2\}}
\newcommand\bigSetOf[2]{\bigl\{#1 \bigm| #2\bigr\}}

\newcommand\torus[1]{\RR^{#1}/\RR\1} 

\newcommand\hahnseries[2]{{#1}\{\hskip-.25em\{{#2}^\RR\}\hskip-.25em\}}

\DeclareMathOperator\dist{dist}
\DeclareMathOperator\conv{conv}

\DeclareMathOperator\val{val}
\DeclareMathOperator\tdet{tdet}

\DeclareMathOperator\SL{SL}

\DeclareMathOperator{\VR}{VR} 
\DeclareMathOperator{\VD}{VD} 
\DeclareMathOperator{\PR}{PR} 
\DeclareMathOperator{\PD}{PD} 
\DeclareMathOperator{\bisector}{bis}
\DeclareMathOperator{\Del}{Del} 

\newtheorem{theorem}{Theorem}
\newtheorem{lemma}[theorem]{Lemma}
\newtheorem{proposition}[theorem]{Proposition}
\newtheorem{corollary}[theorem]{Corollary}

\theoremstyle{remark}

\newtheorem{example}[theorem]{Example}
\newtheorem{definition}[theorem]{Definition}
\newtheorem{remark}[theorem]{Remark}

\definecolor{tolorange}{RGB}{238,119,51}  
\definecolor{tolblue}{RGB}{0,119,187}     
\definecolor{tolgreen}{RGB}{0,153,136}    
\definecolor{tolpurple}{RGB}{136,34,85}   
\definecolor{ibmyellow}{RGB}{255, 176, 0} 
\definecolor{darkgray}{RGB}{64,64,64} 

\tikzstyle{blackdot} = [circle,draw=black,fill=black,scale=0.5]

\usepackage{expl3}
\ExplSyntaxOn
\newcommand \breakDOI[1]
 {
  \tl_map_inline:nn { #1 } { \href{http://dx.doi.org/#1}{##1} \penalty0 \scan_stop: }
 }
\ExplSyntaxOff

\DeclareFieldFormat{doi}{%
  \mkbibacro{DOI}\addcolon\space
  {\breakDOI{#1}}}



\title{Asymmetric tropical distances and power diagrams}

\author{Andrei Com\u{a}neci \and Michael Joswig}

\address[Andrei Com\u{a}neci]{
	Technische Universität Berlin,
	Chair of Discrete Mathematics/Geometry \\
	\texttt{comaneci@math.tu-berlin.de}	
}

\address[Michael Joswig]{
	Technische Universität Berlin,
	Chair of Discrete Mathematics/Geometry \\
	Max-Planck Institute for Mathematics in the Sciences, Leipzig \\
	\texttt{joswig@math.tu-berlin.de}
}

\thanks{Support by the Deutsche Forschungsgemeinschaft (DFG, German Research Foundation) under Germany's Excellence Strategy -- The Berlin Mathematics Research Center MATH$^+$ (EXC-2046/1, project ID 390685689) gratefully acknowledged.
  M.~Joswig has further been supported by \enquote{Symbolic Tools in Mathematics and their Application} (TRR 195, project-ID 286237555).
  A.~Com\u{a}neci was supported by \enquote{Facets of Complexity} (GRK 2434, project-ID 385256563).}
\subjclass[2020]{
  14T15   
  (13D02, 
  46B20,  
  52B55)} 

\keywords{tropical convexity; polyhedral combinatorics; polyhedral gauge; ordinary and tropical quasi-polyhedron; rational lattice; Delone complex; monomial ideal; Scarf complex}

\begin{document}

\begin{abstract}
  We investigate the Voronoi diagrams with respect to an asymmetric tropical distance function, also for infinite point sets.
  These turn out to be much better behaved than the tropical Voronoi diagrams arising from the standard tropical distance, which is symmetric.
  In particular, we show that the asymmetric tropical Voronoi diagrams may be seen as tropicalizations of power diagrams over fields of real Puiseux series.
  Our results are then applied to rational lattices and Laurent monomial modules.
\end{abstract}

\maketitle

\section{Introduction}
\noindent
The asymmetric tropical distance function is the polyhedral norm on the quotient vector space $\torus{n}$ induced by the regular simplex $\conv\{e_1,e_2,\dots,e_n\}$.
In this way, this is a special case of a polyhedral norm with respect to a polytope which is not centrally symmetric; cf.\ \cite[Sect.~7.2]{AurenhammerKleinLee:2013} and \cite[Sect.~4]{MartiniSwanepoel:2004}.
Such a not necessarily symmetric norm is also called a \enquote{polyhedral gauge}.
The asymmetric tropical distance function and the resulting Voronoi diagrams were studied by Amini and Manjunath \cite{RRlattice} and Manjunath \cite{LapLatticeGraph} in the context of tropical versions of the Riemann--Roch theorem for algebraic curves.
Recently, we analyzed the Fermat--Weber problem for the same distance function \cite{ComaneciJoswig:2205.00036}.
For general introductions to tropical geometry see \cite{Tropical+Book} and \cite{ETC}.

One motivation for this work is a recent trend to exploit metric properties of tropical linear spaces and more general tropical varieties for applications in optimization and data science.
Usually, these results employ the tropical distance function $\dist(a,b) = \max (a_i-b_i) - \min (a_j-b_j)$, which is symmetric.
That symmetry seems to suggest that this is the natural distance function to work with.
Moreover, the tropical distance function has a history as \enquote{Hilbert's projective metric}, which was investigated, e.g., by Cohen, Gaubert and Quadrat \cite{CohenGaubertQuadrat04}.
Yet here we gather further evidence that its asymmetric sibling is actually better behaved, geometrically and algorithmically.
This is in line with the observation that the asymmetric tropical Fermat--Weber problem \cite{ComaneciJoswig:2205.00036} is more benign than its symmetric counterpart, which was considered by Lin and Yoshida \cite{Lin-Yoshida:2018}.

Our contributions include the following.
First, we extend the study of the polyhedral geometry of the Voronoi diagrams with respect to the asymmetric tropical distance, a topic which arose in \cite{RRlattice}.
We explicitly admit point sets which are infinite.
For instance, the bisectors turn out to be tropically convex and thus contractible.
This is very different from the symmetric case, where topologically nontrivial bisectors do occur \cite[Example~3]{TropBis}.
We prove that locally the asymmetric tropical Voronoi regions of a discrete set of sites behave like (possibly unbounded) tropical polyhedra; yet globally this is not true in general.
This leads us to defining a new notion of \emph{super-discreteness}, for which we can show that the asymmetric tropical Voronoi regions do form tropical polyhedra (Theorem~\ref{th:polySuperDisc}).
This includes finite sets and rational lattices as special cases.

As a second main result, we prove that the asymmetric tropical Voronoi diagrams arise as tropicalizations of power diagrams over fields of real Puiseux series (Theorem~\ref{th:PuiseuxLift}).
In this way we further explore the connection between tropical convexity and ordinary polyhedral geometry over ordered fields; see \cite[\textsection 2]{DevelinYu07} and \cite[\textsection 5.2]{ETC}.
The relationship between tropical Voronoi diagrams and ordinary power diagrams is also reminiscent of the classical construction of Euclidean Voronoi diagrams through projecting a convex polyhedron whose facets are tangent to the standard paraboloid.
Power diagrams generalize Voronoi diagrams much like regular subdivisions generalize Delone subdivisions of point sets in Euclidean space.
A similar construction for Voronoi diagrams with respect to the symmetric tropical distance is unknown and seems unlikely to exist. 
Edelsbrunner and Seidel investigated Voronoi diagrams for very general distance functions \cite{Edelsbrunner+Seidel:1986}.
Their construction differs from the one developed by Amini and Manjunath \cite{RRlattice}, which we adopt here.
Yet, it turns out that for discrete sets in general position both notions agree (Theorem~\ref{thm:V-cells}).

Finally, we define \emph{asymmetric tropical Delone complexes}.
These are abstract simplicial complexes, which may be somewhat unwieldy in general.
For sites in general position, however, this corresponds to an ordinary Delone triangulation over Puiseux series.
This is interesting because Scarf complexes of Laurent monomial modules arise as special cases (Corollary~\ref{cor:scarf}).
These occur as supports of resolutions in commutative algebra \cite{CombCommAlg}.

Delone complexes are named after the Russian mathematician Boris Nikolayevich Delone (1880--1980), whose name is sometimes written \enquote{Delaunay}.
Delone himself used both spellings in his articles.


\section{Directed distances and polyhedral norms}
\noindent
We start out by fixing our notation.
Consider a polytope $K \subset \RR^n$ with the origin in its interior; the existence of an interior point entails that $\dim K=n$.
For $a,b\in\RR^n$ let $\dist_K(a,b)$ be the unique real number $\alpha>0$ such that $b-a \in \partial (\alpha K)$, where $\alpha K$ is $K$ scaled by $\alpha$.
We call this the \emph{(polyhedral) directed distance} from $a$ to $b$ with respect to $K$.
The function $d_K$ satisfies the triangle inequality, and it is translation invariant and homogeneous with respect to scaling by positive reals.
If additionally $K$ is centrally symmetric, i.e., $K=-K$, then $d_K(a,b)=d_K(b,a)$.
Consequently, in this case $d_K$ is a metric, and $d_K(\cdot,\0)$ is a norm on $\RR^n$.
Norms which arise in this way are called \emph{polyhedral}; see \cite[Sect.~7.2]{AurenhammerKleinLee:2013} and \cite[Sect.~4]{MartiniSwanepoel:2004}.
We write $\0$ for the all zeros and $\1$ for the all ones vectors (of length $n$), respectively.
The \emph{asymmetric tropical distance} in $\RR^n$ is given by
\begin{equation}\label{eq:dtriangle}
  d_\triangle(a,b) \ = \ \sum_{i\in[n]} (b_i - a_i) - n \min_{i\in[n]}(b_i-a_i) \ = \ \sum_{i\in[n]} (b_i -a_i) + n \max_{i\in[n]}(a_i-b_i) \enspace ,
\end{equation}
where $a, b\in\RR^n$.
Since $d_\triangle(a',b')=d_\triangle(a,b)$ for $a-a'\in\RR\1$ and $b-b'\in\RR\1$, this induces the directed distance with respect to the standard simplex $\triangle=\conv\{e_1,\dots,e_n\}+\RR\1$ in the $(n{-}1)$-dimensional quotient vector space $\torus{n}$, which is the \emph{tropical projective torus}.
We do not distinguish between these two functions, and call the latter directed distance function also the \emph{asymmetric tropical distance} in $\torus{n}$.

The \emph{(symmetric) tropical distance} between $a,b\in\RR^n$ (or $\torus{n}$) is more common.
It is defined as
\[
  \dist(a,b) \ = \ \max_{i\in[n]} \left(a_i-b_i\right) - \min_{j\in[n]} \left(a_j-b_j\right) \ = \ \max_{i,j\in[n]}(a_i-b_i-a_j+b_j) \enspace ;
\]
cf.\ \cite[\textsection 5.3]{ETC}.
The symmetric and asymmetric tropical distances are related by
\[
  \begin{aligned}
    \dist(a,b) \ &= \ \tfrac{1}{n}\left(d_{\triangle}(a,b) + d_\triangle(b,a)\right) \quad \text{and}\\
    \dist(a,b) \ &\leq \ d_\triangle(a, b) \ \leq \ (n-1)\dist(a,b) \enspace ,
  \end{aligned}
\]
where the two inequalities are both tight.

\section{Asymmetric tropical Voronoi regions}
\label{sec:regions}
\noindent
In this section we will investigate Voronoi diagrams with respect to the asymmetric tropical distance function.
Our results extend the work of Amini and Manjunath \cite[\textsection 4.2]{RRlattice}, who study these Voronoi diagrams for points located in a sublattice of a root lattice of type $A$.
In a way, here we pick up the suggestion in \cite[Remark 4.9]{RRlattice} to make the connection to tropical convexity.

It will be convenient to work with special coordinates in $\torus{n}$.
For this, we consider the tropical hypersurface
\[
  \cH \ := \SetOf{x\in\RR^n}{ x_1+x_2+\cdots+x_n=0 } \enspace ,
\]
which is an ordinary linear hyperplane in $\RR^n$.
The hyperplane $\cH$ occurs as \enquote{$H_0$} in \cite{RRlattice}.
Observe that $-\cH=\cH$ is centrally symmetric.
This makes $\cH$ a tropical hyperplane with respect to both choices of the tropical addition, $\max$ and $\min$; cf.\ \cite[\textsection 1.3]{ETC}.
Moreover, each point $x+\RR\1\in\torus{n}$ has a unique representative with $\sum x_i=0$.
This gives a linear isomorphism $\torus{n}\cong\cH$ that will be used throughout the paper.
In fact, we will state most of our results using $\cH$ instead of $\torus{n}$ to emphasize this identification.

For our arithmetic we consider the \emph{$\max$-tropical semiring} $\TT=(\RR\cup\{-\infty\},\oplus,\odot)$, where $\oplus=\max$ is the tropical addition, and $\odot=+$ is the tropical multiplication.
The additive neutral element is $-\infty$, and $0$ is neutral with respect to the tropical multiplication.
A (homogeneous) \emph{tropical linear inequality} looks like
\[
  \bigoplus_{i\in I} a_i\odot x_i \ \leq \ \bigoplus_{i\in J} b_j\odot x_j \enspace ,
\]
where $a_i,b_j\in\RR$, and $I,J$ are disjoint nonempty subsets of $[n]$.
The set of solutions is a \emph{tropical halfspace} in $\torus{n}$ (or $\cH$).
A \emph{tropical polyhedron} is the intersection of finitely many tropical halfspaces; see\ \cite[\textsection 7.2]{ETC}.
Replacing $\max$ by $\min$ and $-\infty$ by $\infty$ we arrive at the \emph{$\min$-tropical semiring}, which is isomorphic to $\TT$ as a semiring.
Here we stick to the $\max$-convention throughout.
Nonetheless, we often stress this choice as this is a consequence of how we defined $d_\triangle$ in \eqref{eq:dtriangle}.
A set $C\subset\RR^n$ is \emph{tropically convex} if for all $x,y\in C$ and $\lambda,\mu\in\RR$ we have $\lambda{\odot}x \oplus \mu{\odot}y\in C$.
Each nonempty tropically convex set contains $\RR\1$, whence we may study tropical convexity in $\torus{n}$; cf.\ \cite[\textsection 5.2]{ETC}.

Let $S\subset\torus{n}$ be a nonempty discrete set of points, which is possibly infinite.
Following common practice in computational geometry, the points in $S$ will be called the \emph{sites}.
Throughout, we will measure distances via the asymmetric tropical distance function $d_\triangle$.
The \emph{(asymmetric tropical) Voronoi region} of a site $a\in S$ is the set
\[
  \VR_S(a) \ := \ \bigSetOf{x\in\cH}{d_\triangle(x,a) \leq d_\triangle(x,b) \text{ for all } b\in S} \enspace .
\]
For two distinct sites $a,b\in\cH$ we abbreviate $h(a,b)=\VR_{\{a,b\}}(a)$.
This notation yields $\VR_S(a)=\bigcap_{b\in S\setminus\{a\}} h(a,b)$.
The analysis of asymmetric tropical Voronoi regions starts with the following basic observation, which is similar to \cite[Lemma~1]{Gaubert+Katz:2011}.
\begin{proposition}\label{prop:VR-pair}
  For two distinct points $a,b\in\cH$ the Voronoi region $h(a,b)$ is a $\max$-tropical halfspace.
\end{proposition}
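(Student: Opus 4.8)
The plan is to unwind the defining inequality $d_\triangle(x,a)\le d_\triangle(x,b)$ into a single tropical linear inequality, and then to massage it into the canonical form from Section~\ref{sec:regions}, i.e.\ with disjoint nonempty index sets.

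First I would use that we live on $\cH$. Since $a,b\in\cH$, every $x\in\cH$ satisfies $\sum_{i\in[n]}(a_i-x_i)=0=\sum_{i\in[n]}(b_i-x_i)$, so the formula \eqref{eq:dtriangle} collapses to $d_\triangle(x,a)=n\max_{i\in[n]}(x_i-a_i)$ and likewise for $b$. Hence
\[
  h(a,b) \ = \ \bigSetOf{x\in\cH}{\max_{i\in[n]}(x_i-a_i)\le\max_{j\in[n]}(x_j-b_j)} \enspace ,
\]
and rewriting $\max_{i\in[n]}(x_i-a_i)=\bigoplus_{i\in[n]}(-a_i)\odot x_i$ already exhibits $h(a,b)$ as the solution set of a $\max$-tropical linear inequality — except that both sides have the full support $[n]$, which is not permitted by the definition.

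The heart of the argument is to shrink the supports. Put $I:=\{i\in[n] : a_i<b_i\}$ and $J:=[n]\setminus I=\{j\in[n] : a_j\ge b_j\}$, which partition $[n]$. The first step is to restrict the left-hand maximum to $I$: for $i\in J$ the term $x_i-a_i$ is dominated by the right-hand term $x_i-b_i$, so the inequality $\max_{i\in[n]}(x_i-a_i)\le\max_{j\in[n]}(x_j-b_j)$ is equivalent to $\max_{i\in I}(x_i-a_i)\le\max_{j\in[n]}(x_j-b_j)$ (one inclusion is trivial; for the other one splits the left max over $I$ and $J$ and bounds each part). The second step is to restrict the right-hand maximum to $J$: if, for some $x\in h(a,b)$, the right-hand maximum over $[n]$ were attained at an index $k\in I$, then on $h(a,b)$
\[
  x_k-a_k \ > \ x_k-b_k \ = \ \max_{j\in[n]}(x_j-b_j) \ \ge \ \max_{i\in I}(x_i-a_i) \ \ge \ x_k-a_k \enspace ,
\]
a contradiction; so that maximum is always attained inside $J$ and the restriction changes nothing. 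Together these give
\[
  h(a,b) \ = \ \bigSetOf{x\in\cH}{\bigoplus_{i\in I}(-a_i)\odot x_i \ \le \ \bigoplus_{j\in J}(-b_j)\odot x_j} \enspace .
\]

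It remains to check that $I$ and $J$ are both nonempty. If $I=\emptyset$, then $a_i\ge b_i$ for all $i$, which together with $\sum a_i=\sum b_i=0$ forces $a=b$ in $\cH$, contrary to the hypothesis; the case $J=\emptyset$ is symmetric. Since $I,J$ are disjoint nonempty subsets of $[n]$ and the coefficients $-a_i,-b_j$ are real numbers, the displayed condition is a homogeneous $\max$-tropical linear inequality, so $h(a,b)$ is a $\max$-tropical halfspace. I expect the only genuine subtlety to be the support-shrinking, i.e.\ eliminating the overlap of the two index sets while keeping the solution set unchanged; the remaining steps are routine bookkeeping with the normalization on $\cH$.
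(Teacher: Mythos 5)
Your proof is correct and follows essentially the same route as the paper's: reduce $d_\triangle(x,a)\le d_\triangle(x,b)$ on $\cH$ to the full-support inequality $\bigoplus_i(-a_i)\odot x_i\le\bigoplus_i(-b_i)\odot x_i$, then show it is equivalent to the inequality with supports $I=\{i: a_i<b_i\}$ and its complement, with nonemptiness of both supports coming from $a\neq b$ and $\sum a_i=\sum b_i=0$. The only difference is organizational: you shrink the left and right supports in two separate steps, while the paper argues the same equivalence by a case distinction on where the left-hand maximum is attained; both are sound.
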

\begin{proof}
  The inequality $d_\triangle(x,a) \leq d_\triangle(x,b)$ translates into
  \[
     n \max_{i\in[n]}(x_i-a_i) \ \leq \  n \max_{i\in[n]}(x_i-b_i) \enspace.
  \]
  Then the above inequality is equivalent to
  \begin{equation}\label{eq:VR-pair:ineq}
    \bigoplus_{i\in[n]} (-a_i) \odot x_i \ \leq \ \bigoplus_{i\in[n]} (-b_i) \odot x_i \enspace .
  \end{equation}
  Since further $a\neq b$, the difference $a-b$ must have positive as well as negative coordinates.
  Consequently, the set $I:=\smallSetOf{i\in[n]}{a_i<b_i}$ is a nonempty and proper subset of $[n]$.
  We will show that \eqref{eq:VR-pair:ineq} is equivalent to
  \begin{equation}\label{eq:VR-pair:halfspace}
    \bigoplus_{i\in I} (-a_i) \odot x_i \ \leq \ \bigoplus_{i\not\in I} (-b_i) \odot x_i \enspace .
  \end{equation}
  Suppose that $x\in\cH$ satisfies \eqref{eq:VR-pair:ineq}.
  Let $k,\ell\in[n]$ be indices with $\max (-a_i + x_i)=-a_k + x_k \leq -b_\ell + x_\ell = \max (-b_i + x_i)$.
  We distinguish two cases.
  Either $k\in I$, whence $-a_k>-b_k$ and thus $\ell\not\in I$; so $x$ satisfies \eqref{eq:VR-pair:halfspace}.
  Or $k\not\in I$, whence $-a_k\leq -b_k$; in this case $x$ trivially satisfies \eqref{eq:VR-pair:halfspace}.
  This argument can be reverted, which proves the reverse implication.
  The homogeneous max-tropical linear inequality \eqref{eq:VR-pair:halfspace} describes a max-tropical linear halfspace.
\end{proof}
The above result has a direct consequence, which sharpens \cite[Lemma~4.4]{RRlattice}.
\begin{corollary}
  For an arbitrary finite set of sites $S\subset\cH$ and $a\in S$ the Voronoi region $\VR_S(a)$ is a max-tropical tropical polyhedron.
\end{corollary}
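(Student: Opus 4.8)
The plan is to deduce this immediately from Proposition~\ref{prop:VR-pair} via the decomposition $\VR_S(a)=\bigcap_{b\in S\setminus\{a\}} h(a,b)$ recorded just before that proposition.

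First I would dispose of the degenerate case $S=\{a\}$, where $S\setminus\{a\}=\emptyset$: here $\VR_S(a)=\cH$, which counts as a $\max$-tropical polyhedron under the usual convention that an empty intersection of tropical halfspaces equals the ambient space. For $S\neq\{a\}$, Proposition~\ref{prop:VR-pair} shows that each $h(a,b)$ with $b\in S\setminus\{a\}$ is a $\max$-tropical halfspace. Since $S$ is finite, the index set $S\setminus\{a\}$ is finite, so
\[
  \VR_S(a) \ = \ \bigcap_{b\in S\setminus\{a\}} h(a,b)
\]
is a finite intersection of $\max$-tropical halfspaces, hence a $\max$-tropical polyhedron by the definition fixed in Section~\ref{sec:regions}.

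There is no genuine obstacle here; this is a one-line corollary of the preceding proposition. The only point worth stressing is that every halfspace produced by Proposition~\ref{prop:VR-pair} is taken with respect to the same ($\max$-)convention, so that the intersection is a tropical polyhedron in the precise sense defined above, and not an intersection mixing halfspaces of both conventions.
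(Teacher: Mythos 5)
Your proof is correct and follows exactly the paper's argument: writing $\VR_S(a)=\bigcap_{b\in S\setminus\{a\}}h(a,b)$ and applying Proposition~\ref{prop:VR-pair} to each finite intersectand. The extra remarks on the degenerate case $S=\{a\}$ and on keeping the $\max$-convention uniform are harmless clarifications, not a different route.
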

\begin{proof}
  We have $\VR_S(a) = \bigcap_{b\in S\setminus\{a\}} h(a,b)$, and thus the claim follows from the previous result.
\end{proof}

We define the \emph{(asymmetric tropical) Voronoi diagram} of $S$, denoted $\VD(S)$, as the intersection poset generated by the Voronoi regions.
The intersections of nonempty families of Voronoi regions are the \emph{(asymmetric tropical) Voronoi cells}.
These form the elements of $\VD(S)$, and they are partially ordered by inclusion.
The \emph{(asymmetric tropical) bisector} of $S$ is the set
\[
  \bisector(S) \ = \ \bigSetOf{x\in\cH}{d_\triangle(x,a) = d_\triangle(x,b) \text{ for all } a,b\in S} \ = \ \bigcap_{a\in S} \VR_S(a) \enspace .
\]
We have the following basic topological information about bisectors and Voronoi cells.
\begin{corollary}
  Any bisector $\bisector(S)$ and any cell of $\VD(S)$ is $\max$-tropically convex and hence contractible or empty.
\end{corollary}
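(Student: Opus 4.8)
The plan is to reduce the statement to two facts: first, that every Voronoi cell (and in particular any bisector, which is itself a Voronoi cell of $\VD(S)$) is $\max$-tropically convex; and second, that a nonempty $\max$-tropically convex subset of $\cH$ is contractible. The empty case is trivial, so throughout we assume nonemptiness.

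For the tropical convexity of Voronoi cells, I would argue directly from the definition of $d_\triangle$ in \eqref{eq:dtriangle}. Each Voronoi region $\VR_S(a)$ is an intersection of sets of the form $h(a,b)$, which are $\max$-tropical halfspaces by Proposition~\ref{prop:VR-pair}; hence $\VR_S(a)$ is an intersection of $\max$-tropically convex sets, and so is $\max$-tropically convex. A Voronoi cell is a further (finite or infinite) intersection of Voronoi regions, hence again an arbitrary intersection of $\max$-tropically convex sets, hence $\max$-tropically convex. The bisector $\bisector(S)=\bigcap_{a\in S}\VR_S(a)$ is a special case of this. The key point making this clean is that tropical convexity, like ordinary convexity, is preserved under arbitrary intersections, so no finiteness hypothesis on $S$ is needed.

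For contractibility, I would invoke the standard fact that a nonempty $\max$-tropically convex set $C\subset\torus{n}$ is contractible: fixing a point $p\in C$, the map $(t,x)\mapsto (t\odot p)\oplus x$ for $t$ ranging over $[-\infty,0]$ (suitably normalized back to $\cH$) gives a deformation retraction of $C$ onto $\{p\}$, since $C$ is closed under tropical scaling and tropical addition and contains $p$ and $x$. This is essentially \cite[\textsection 5.2]{ETC}; I would cite it rather than reprove it. One should note that Voronoi cells are closed (being defined by non-strict inequalities on the continuous function $d_\triangle$), so the homotopy stays inside $C$ at the endpoint parameter as well.

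The main obstacle — really the only subtlety — is making sure the contractibility argument is valid for \emph{infinite} intersections and for sets that need not be polyhedra: the deformation retraction argument for tropical convexity works verbatim for any tropically convex set, bounded or not, finitely generated or not, so the infinite site case causes no trouble here. I would therefore keep the proof short: state that each cell is an intersection of the halfspaces $h(a,b)$, invoke Proposition~\ref{prop:VR-pair} and closure of tropical convexity under intersection, and then quote the contractibility of nonempty tropically convex sets.
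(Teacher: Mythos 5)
Your proof is correct and follows essentially the same route as the paper: each cell is an intersection of max-tropical halfspaces (via Proposition~\ref{prop:VR-pair}), tropical convexity is preserved under arbitrary intersections, and contractibility of nonempty tropically convex sets is quoted from \cite[\textsection 5.2]{ETC}, which is exactly the paper's argument. The only quibble is your sketched homotopy $(t\odot p)\oplus x$ with $t\in[-\infty,0]$: at $t=0$ it ends at $p\oplus x$ rather than at $p$, so by itself it is not a contraction onto $\{p\}$ (one would follow it with a second homotopy $(t\odot x)\oplus p$, $t$ from $0$ to $-\infty$), but since you defer to the cited result this does not affect the correctness of the proof.
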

\begin{proof}
  The boundary plane of a tropical halfspace is tropically convex \cite[Observation~7.5]{ETC}.
  The intersection of tropically convex sets is tropically convex.
  Nonempty tropically convex sets are contractible \cite[Proposition~5.22]{ETC}.
\end{proof}
By \cite[Theorem~7.11]{ETC} the closure of any bisector or Voronoi cell in the tropical projective space is a tropical polytope.
This is in stark contrast with the situation in symmetric tropical Voronoi diagrams, which allow for topologically nontrivial bisectors; cf.\ \cite[Example~3]{TropBis}.

\begin{figure}[th]
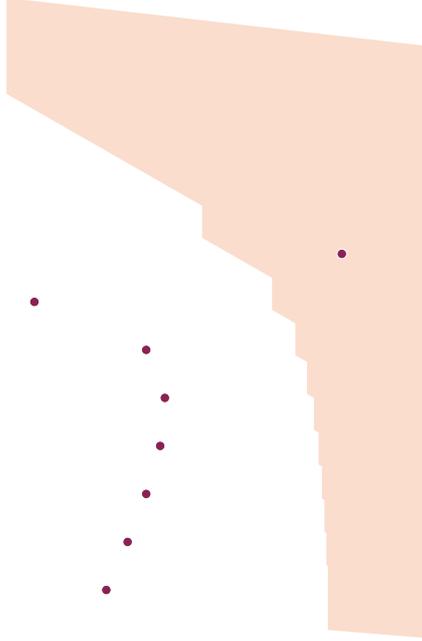

  \centering
  \includestandalone[width=0.35\linewidth]{discreteNonPolyhedral}
  \caption{The non-polyhedral Voronoi region $\VR_S(\0)$ from Example~\ref{ex:nonpolyh}, for $a=5$.}
  \label{fig:discrete-nonpolyhedral}
\end{figure}

Euclidean Voronoi regions for a general discrete set need not be polyhedral, but they are always \emph{quasi-polyhedra}, i.e., their intersections with polytopes yield polytopes \cite[Proposition~32.1]{Gruber:2007}.
The next example shows that the situation is similar in the tropics.
Comprehensive discussions on Euclidean Voronoi diagrams can be found in \cite{Voigt:2008} and \cite{AurenhammerKleinLee:2013}.
\begin{example} \label{ex:nonpolyh}
  For a fixed positive real number $a$, consider the infinite set
  \[ S \ = \ \{\0\}\cup\SetOf{\left(n+\tfrac{a}{n},-\tfrac{a}{n},-n\right)}{n\in\ZZ_{>0}} \enspace, \]
  which is discrete.
  Then the Voronoi cell $\VR_S(\0)$ is defined by the inequalities $x_1\leq\max(x_2+\tfrac{a}{n},x_3+n)$ for all positive integers $n$.
  None of these are redundant.
  Therefore, $\VR_S(\0)$ is not a tropical polyhedron; see Figure~\ref{fig:discrete-nonpolyhedral}.
\end{example}

A point set $S\subset\cH$ is in \emph{general position} if for any pair of distinct sites $a,b\in S$ we have $a_i\neq b_i$ for all $i\in[n]$.
The following observation is similar to \cite[Proposition~2]{TropBis}, which is about the symmetric tropical distance function.
\begin{lemma}\label{lem:bisector-generic}
  For two distinct points $a,b\in\cH$ in general position the two point bisector $\bisector(a,b)$ is the boundary plane of a $\max$-tropical halfspace.
  In particular, it is an ordinary polyhedral complex of codimension one.
\end{lemma}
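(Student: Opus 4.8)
The plan is to follow the template of Proposition~\ref{prop:VR-pair}, but now track exactly when the inequality $d_\triangle(x,a)\leq d_\triangle(x,b)$ becomes an equality, and use the general position hypothesis to pin down the bisector precisely.

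First I would recall from the proof of Proposition~\ref{prop:VR-pair} that $h(a,b)$ is the $\max$-tropical halfspace defined by \eqref{eq:VR-pair:halfspace}, namely $\bigoplus_{i\in I}(-a_i)\odot x_i \leq \bigoplus_{i\notin I}(-b_i)\odot x_i$, where $I=\smallSetOf{i\in[n]}{a_i<b_i}$; by symmetry $h(b,a)$ is cut out by the reversed inequality with the roles of $a,b$ (and of $I$, $[n]\setminus I$) swapped, i.e.\ $\bigoplus_{i\notin I}(-b_i)\odot x_i \leq \bigoplus_{i\in I}(-a_i)\odot x_i$, where here I use that general position forces $I^c=\smallSetOf{i}{a_i>b_i}=\smallSetOf{i}{b_i<a_i}$ exactly, with no ties. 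Then $\bisector(a,b)=h(a,b)\cap h(b,a)$ is precisely the locus where
\[
  \bigoplus_{i\in I}(-a_i)\odot x_i \ = \ \bigoplus_{i\notin I}(-b_i)\odot x_i \enspace ,
\]
which is by definition the boundary plane (the \enquote{tropical hyperplane}) of the halfspace from \eqref{eq:VR-pair:halfspace}.

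The step that needs genuine care — and which I expect to be the main obstacle — is the reverse-style argument showing that on $\bisector(a,b)$ the two sides of \eqref{eq:VR-pair:ineq} are equal \emph{in the restricted index sets}, i.e.\ that no cancellation or coincidence of maxima spoils the identification of $\bisector(a,b)$ with the boundary plane of a single tropical halfspace rather than something smaller. Concretely: if $x$ satisfies \eqref{eq:VR-pair:halfspace} with equality, one must check that $x$ also lies in $h(b,a)$, using that at any index $k$ achieving $\max_i(-a_i+x_i)$ we have $k\in I$ forces (via general position) the maximum on the right to be attained off $I$, and vice versa; the general position hypothesis is exactly what prevents the degenerate middle case $-a_k=-b_k$ that appeared in the proof of Proposition~\ref{prop:VR-pair}. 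Running this equivalence in both directions yields $\bisector(a,b)=\partial h(a,b)$.

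Finally, for the \enquote{in particular} clause I would invoke \cite[Observation~7.5]{ETC} (or \cite[Theorem~7.11]{ETC}): the boundary plane of a $\max$-tropical halfspace is a pure ordinary polyhedral complex of codimension one in $\cH\cong\torus{n}$, being the non-regularity locus of the tropical linear form $\bigoplus_{i\in I}(-a_i)\odot x_i \oplus \bigoplus_{i\notin I}(-b_i)\odot x_i$ where its maximum is attained by at least two of the relevant terms with the required sign pattern. This gives both assertions of the lemma. The only routine bookkeeping is matching index sets, and the only subtlety is the general position usage flagged above.
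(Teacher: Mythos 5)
Your proof is correct and takes essentially the same route as the paper: apply Proposition~\ref{prop:VR-pair} to both $h(a,b)$ and $h(b,a)$, and use general position (no ties $a_i=b_i$) to conclude that the two halfspaces are cut out by opposite inequalities on complementary index sets, so their intersection is the equality locus, i.e., the boundary plane of the halfspace. The step you flag as the main obstacle is in fact immediate once both halfspaces are written in the restricted form \eqref{eq:VR-pair:halfspace}, since general position makes $\smallSetOf{i}{b_i<a_i}$ exactly the complement of $I$; it is precisely the tie indices (absent here) that would place a full sector inside the intersection, as in the paper's non-generic example.
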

\begin{proof}
  By Proposition~\ref{prop:VR-pair} the bisector $\bisector(a,b)$ is the intersection of two opposite tropical halfspaces.
  Since $a$ and $b$ are generic no sector of the corresponding tropical hyperplane is contained in the intersection, and this is the claim.
\end{proof}

\begin{figure}[th]
  \centering
  \includestandalone[width=0.6\linewidth]{gen_pos}
  \caption{Bisectors of two sites in $\torus{3}$: general position vs.\ non-general position}
  \label{fig:genpos}
\end{figure}

The assumption of general position is needed in Lemma~\ref{lem:bisector-generic}, as the next example shows.
\begin{example}
  Consider $a=(-6,-5,11)$ and $b=(-5,12,-7)$, which are in general position.
  The bisector $\bisector(a,b)$ is the boundary of a $\max$-tropical halfspace with apex $\min(a,b)=(-6,-5,-7)\in\torus{3}$, whose representative in $\cH$ is $(0,1,-1)$.
  However, the points $c=(-5,-5,10)$ and $d=(-5,10,-5)$ are not in general position.
  The bisector $\bisector(c,d)$ contains the entire $\max$-tropical hyperplane with apex $\min(c,d)=(-5,-5,-5)=(0,0,0)$ and the sector $\smallSetOf{x}{x_1\geq\max(x_2,x_3)}$.
  Both cases are illustrated in Figure~\ref{fig:genpos}. 
\end{example}

If $S$ is not in general position, the Voronoi regions might not be pure dimensional; see Example~\ref{exmp:lattice} below.
This happens because the closure of the interior of a Voronoi region might be a proper subset of the region.
However, for $S$ in general position, this situation does not occur.

\begin{lemma}\label{lem:puredim}
  If $S\subset\cH$ is a discrete set in general position, then any Voronoi region $\VR_S(a)$ agrees with the closure of its interior.
\end{lemma}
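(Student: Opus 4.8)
The plan is to show that no point $x \in \VR_S(a)$ can be isolated from the interior, i.e., that every $x$ in the region is a limit of interior points. Since $\VR_S(a)$ is closed (being an intersection of closed halfspaces $h(a,b)$), the containment ``closure of interior $\subseteq \VR_S(a)$'' is immediate; the content is the reverse inclusion, which amounts to showing $\VR_S(a)$ has nonempty interior near each of its points.

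First I would fix $x \in \VR_S(a)$ and, using discreteness of $S$ together with the local structure established earlier (the Voronoi region looks locally like a finite intersection of the tropical halfspaces $h(a,b)$, since only finitely many sites $b$ have $d_\triangle(x,b)$ close to $d_\triangle(x,a)$ in a bounded neighborhood of $x$), reduce to a \emph{finite} set of ``active'' sites $B = \{b \in S : d_\triangle(x,b) = d_\triangle(x,a)\}$; for all other nearby sites the strict inequality $d_\triangle(x,a) < d_\triangle(x,b)$ persists in a neighborhood, so they impose no local constraint. The claim then becomes: the tropical polyhedron $\bigcap_{b \in B} h(a,b)$ has $x$ in the closure of its interior.

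Next, for each $b \in B$, recall from the proof of Proposition~\ref{prop:VR-pair} that $h(a,b)$ is the tropical halfspace defined by $\bigoplus_{i\in I_b}(-a_i)\odot x_i \leq \bigoplus_{i\notin I_b}(-b_i)\odot x_i$ where $I_b = \{i : a_i < b_i\}$, and that on $h(a,b)$ the original inequality $\max_i(x_i - a_i) \le \max_i(x_i - b_i)$ holds. The key point is the general position hypothesis: since $a_i \ne b_i$ for all $i$, the index $k$ achieving $\max_i(x_i - a_i)$ and the index $\ell$ achieving $\max_i(x_i - b_i)$ cannot coincide at an \emph{interior} point, and one can perturb $x$ within $\cH$ — decreasing $x_k$ slightly along a direction that keeps the sum zero and does not disturb which coordinates attain the relevant maxima — to make $\max_i(x_i - a_i) < \max_i(x_i - b_i)$ strictly, simultaneously for every $b \in B$ (this is where finiteness of $B$ is used). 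Such a perturbed point lies in the interior of each $h(a,b)$, hence in the interior of $\VR_S(a)$, and $x$ is its limit as the perturbation shrinks.

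The main obstacle I expect is the simultaneity: one must choose a single perturbation direction $v \in \cH$ that strictly improves $d_\triangle(x,a) < d_\triangle(x,b)$ for all finitely many active $b$ at once, rather than just one at a time. The natural candidate is to push $x$ slightly in the direction that decreases the maximal coordinates of $x - a$; concretely, if $M = \{i : x_i - a_i = \max_j(x_j - a_j)\}$, move by $v$ with $v_i < 0$ for $i \in M$ and $v_i > 0$ off $M$, scaled so $\sum v_i = 0$. For each $b \in B$, general position guarantees $M \ne \{i : x_i - b_i = \max_j(x_j-b_j)\}$, so this decreases $\max_i((x+tv)_i - a_i)$ strictly faster than (or the same as, while $b$'s max is non-decreasing on some coordinate outside $M$) $\max_i((x+tv)_i - b_i)$ for small $t>0$; a short case analysis mirroring the one in Proposition~\ref{prop:VR-pair} shows strict inequality is attained. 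One then notes the perturbed point still lies in $\cH$ and, for small enough $t$, still satisfies the (now strict) inequalities against all of $S$, so it is an interior point of $\VR_S(a)$ converging to $x$.
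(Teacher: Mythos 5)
Your argument is correct in substance, but it takes a genuinely different route from the paper. You work locally at $x$: reduce to the finitely many ``active'' sites (via discreteness), then perturb $x$ along a direction $v$ chosen negative exactly on the argmax set $M$ of $x-a$ and positive elsewhere, making all active inequalities strict at once. The paper instead contracts towards the site: it sets $x^{(t)}=a+t(x-a)$ for $t\in[0,1)$ and checks, uniformly for \emph{every} $b\neq a$, that $\max_{i\in I(a,b)}\bigl(x^{(t)}_i-a_i\bigr)<\max_{j\in I(b,a)}\bigl(x^{(t)}_j-b_j\bigr)$, the strictness coming simply from $(1-t)(a_j-b_j)>0$ for $j\in I(b,a)$; general position is used only to ensure that $[n]\setminus I(a,b)=I(b,a)$, and discreteness only to identify the interior of $\VR_S(a)$ with the intersection of the interiors of the halfspaces $h(a,b)$. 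This buys a single perturbation direction independent of $x$'s argmax structure, no identification of active constraints, and no case analysis. Two points in your version should be tightened. First, what you need for an active $b$ (i.e.\ $\max_i(x_i-a_i)=\max_i(x_i-b_i)$) is not merely $M\neq\smallSetOf{i}{x_i-b_i=\max_j(x_j-b_j)}$ but \emph{disjointness} of the two argmax sets: if some $i$ lay in both, then $x_i-a_i=x_i-b_i$ would force $a_i=b_i$, contradicting general position. Disjointness is exactly what guarantees that your $v$ strictly increases $\max_i(x_i-b_i)$ while strictly decreasing $\max_i(x_i-a_i)$; mere inequality of the sets would allow the argmax of $x-b$ to be a proper subset of $M$, where your sign argument alone does not conclude and a rate comparison would be needed. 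Second, the edge case $x=a$, where $M=[n]$ and no direction with your sign pattern exists, should be noted separately (it is harmless: $a$ lies in the interior once the local-finiteness reduction is in place). With those repairs your proof goes through; the paper's segment argument simply avoids both issues.
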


\begin{proof}
  Let $x\in\VR_S(a)$ and $x^{(t)}=a+t(x-a)$ for $t\in[0,1)$.
  Consider also an arbitrary site $b\in S\setminus\{a\}$.
  Due to the general position of $S$, we have
  \[ \max_{i\in I(a,b)}(x_i-a_i) \ \leq \ \max_{j\in I(b,a)}(x_j-b_j) \enspace ,\]
  where $I(a,b):=\smallSetOf{i\in[n]}{a_i<b_i}$ as $x\in h(a,b)$.
  Then
  \[\begin{split}
      \max_{i\in I(a,b)}\left(x^{(t)}_i-a_i\right) \ &= \ t\max_{i\in I(a,b)}(x_i-a_i)\\
      & \leq \ t\max_{j\in I(b,a)}(x_j-b_j) \ = \ \max_{j\in I(b,a)}t(x_j-b_j)\\
      & < \ \max_{j\in I(b,a)}\left((1-t)(a_j-b_j)+t(x_j-b_j)\right)\ = \ \max_{j\in I(b,a)}\left(x^{(t)}_j-b_j\right) \enspace .
    \end{split}
  \]
  As $S$ is discrete, the interior of $\VR_S(a)$ is the intersection of the interiors of the halfspaces $h(a,b)$ for $b\neq a$.
  So the previous inequality shows that $x^{(t)}$ belongs to the interior of $\VR_S(a)$ for every $t\in[0,1)$.
  The conclusion follows because $\lim_{t\to 1}x^{(t)}=x$.
\end{proof}

\section{Super-discrete sets of sites}
\noindent
We continue the study of a possibly infinite number of sites.
In Example~\ref{ex:nonpolyh} we saw that tropical Voronoi regions do not need to be tropical polyhedra in general.
Nonetheless, tropical Voronoi regions are always locally polyhedral.
Here we will explore the details, and we will develop a notion which forces tropical polyhedrality.
This turns out to be applicable to tropical Voronoi diagrams of lattices.

A subset of $\cH$ is a \emph{tropical quasi-polyhedron} if its intersection with any bounded tropical polyhedron is a tropical polyhedron.

\begin{proposition}
  The tropical Voronoi regions of a discrete set $S\subset\cH$ are tropical quasi-polyhedra.
\end{proposition}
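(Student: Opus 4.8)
The plan is to reduce the intersection $\VR_S(a)\cap P$, for a bounded tropical polyhedron $P\subseteq\cH$, to a \emph{finite} intersection of tropical halfspaces. By Proposition~\ref{prop:VR-pair} we know $\VR_S(a)=\bigcap_{b\in S\setminus\{a\}}h(a,b)$, each $h(a,b)$ being a $\max$-tropical halfspace, and a tropical polyhedron is by definition a finite intersection of tropical halfspaces. Hence it suffices to show that $P\subseteq h(a,b)$ for all but finitely many sites $b\in S$; then $\VR_S(a)\cap P$ is cut out of the bounded polyhedron $P$ by finitely many tropical halfspaces, and is therefore again a tropical polyhedron.

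To make this quantitative I would first rewrite the distance in the coordinates of $\cH$: for $x,a\in\cH$ one has $\sum_i(a_i-x_i)=0$, so \eqref{eq:dtriangle} collapses to $d_\triangle(x,a)=n\max_{i\in[n]}(x_i-a_i)$, and $x\in h(a,b)$ is equivalent to $\max_i(x_i-a_i)\le\max_i(x_i-b_i)$. Since $P$ is compact, fix constants $M$ and $C$ with $\max_i(x_i-a_i)\le M$ and $\min_i x_i\ge -C$ for every $x\in P$. The key estimate is a lower bound on $\max_i(x_i-b_i)$ that grows with the distance of $b$ from the origin: for $b\in\cH$ one checks that $\min_i b_i\le -\lVert b\rVert_\infty/(n-1)$, whence $\max_i(x_i-b_i)\ge\min_i x_i-\min_i b_i\ge -C+\lVert b\rVert_\infty/(n-1)$ for all $x\in P$. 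Thus as soon as $\lVert b\rVert_\infty\ge(n-1)(M+C)$ we obtain $\max_i(x_i-a_i)\le M\le\max_i(x_i-b_i)$ for every $x\in P$, i.e.\ $P\subseteq h(a,b)$.

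It remains to invoke discreteness of $S$ in the form of local finiteness: the region $\{b\in\cH:\lVert b\rVert_\infty<(n-1)(M+C)\}$ is bounded, hence contains only finitely many sites of $S$. By the previous step all other sites impose no constraint on $P$, so $\VR_S(a)\cap P$ equals the intersection of $P$ with the finitely many halfspaces $h(a,b)$ coming from the sites inside that region — a tropical polyhedron, as required.

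The one step that needs a little care is the norm estimate $\min_i b_i\le -\lVert b\rVert_\infty/(n-1)$ for $b\in\cH$: one distinguishes whether $\lVert b\rVert_\infty$ is attained at a nonpositive coordinate, where the bound is immediate, or at a positive one, where $\sum_i b_i=0$ forces the remaining $n-1$ coordinates to have average at most $-\lVert b\rVert_\infty/(n-1)$; one then keeps track of the resulting threshold $(n-1)(M+C)$ on $\lVert b\rVert_\infty$. Everything else is bookkeeping once the asymmetric distance has been written as $n\max_i(x_i-a_i)$ on $\cH$.
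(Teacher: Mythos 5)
Your argument is correct and follows essentially the same route as the paper's proof: both show that every site sufficiently far away has, by the sum-zero condition on $\cH$, a coordinate so negative that its halfspace $h(a,b)$ contains the whole bounded region, and then invoke discreteness to reduce to finitely many halfspaces. The only cosmetic difference is that you work with an arbitrary bounded tropical polyhedron $P$ and explicit constants, whereas the paper first reduces to symmetric tropical balls $B_M$ around a translated site.
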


\begin{proof}
  Up to a translation, we can assume that $\0$ is among the sites $S$.
  It suffices to show that $\VR_S(\0)$ intersected with any symmetric tropical ball around $\0$ is a tropical polyhedron.
  For $M\in\RR_{>0}$ let $B_M$ the symmetric tropical ball given by $x_i-x_j\leq M$ for all $i,j\in[n]$.
  Due to the discreteness of $S$ there are only finitely many points of $S$ inside the cube $C_M = [-(n-1)M,(n-1)M]^n$.

  Consider $a\in S$ which lies outside $C_M\cap\cH$.
  Then there exists $i\in [n]$ with $|a_i|>(n-1)M$.
  We claim that there is an index $j\in[n]$ such that $a_j < -M$.
  To see this, assume the contrary.
  We have $\sum_{k\in[n]}a_k=a_i+\sum_{k\neq i}a_k>(n-1)M+(n-1)\cdot(-M)=0$, which contradicts $a\in\cH$.

  Let $x\in B_M$.
  Then $\max_{k\in[n]}(-a_k+x_k)\geq -a_j+x_j>M+x_j\geq\max_{k\in [n]}x_k$.
  This yields $B_M\subseteq h(\0,a)$.
  Therefore, $\VR_S(\0)\cap B_M=\bigcap_{s\in C_M\cap(S\setminus\{\0\})}h(\0,s)\cap B_M$ is a bounded intersection of finitely many tropical halfspaces, i.e., a tropical polyhedron.
\end{proof}

Bounded tropical quasi-polyhedra are tropical polyhedra. This gives us a first criterion to check if a Voronoi region is a tropical polyhedron.

\begin{corollary}
  If $S$ is a discrete set and $\VR_S(s)$ is bounded for some $s$, then $\VR_S(s)$ is a tropical polyhedron.
\end{corollary}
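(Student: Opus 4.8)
The plan is to read this off the preceding proposition together with the fact, announced in the sentence just before the statement, that a bounded tropical quasi-polyhedron is automatically a tropical polyhedron. So the whole content is to make that fact precise and then apply it.

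First I would invoke the previous proposition: for the discrete set $S$, the region $\VR_S(s)$ is a tropical quasi-polyhedron, meaning that its intersection with any bounded tropical polyhedron is again a tropical polyhedron. Next I would exhibit one bounded tropical polyhedron that actually contains $\VR_S(s)$. Since $\VR_S(s)$ is assumed bounded in $\cH$, there is some $M>0$ such that $\VR_S(s)$ lies inside the symmetric tropical ball $B$ of radius $M$ centered at $s$, that is,
$B=\bigSetOf{x\in\cH}{x_i-x_j\le M+s_i-s_j\text{ for all }i,j\in[n]}$. Each defining condition is a homogeneous $\max$-tropical linear inequality of the form $x_i\le (M+s_i-s_j)\odot x_j$, so $B$ is a tropical polyhedron, and it is bounded in $\torus{n}$. (Equivalently, after translating so that $s=\0$, one may simply reuse the symmetric tropical ball $B_M$ from the proof of the previous proposition, which was already checked there to be a bounded tropical polyhedron.) Applying the quasi-polyhedron property to this $B$ then gives $\VR_S(s)=\VR_S(s)\cap B$, which is therefore a tropical polyhedron.

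There is no serious obstacle here; the argument is a one-line deduction from the proposition. The only point that needs a moment's attention is the elementary verification that a symmetric tropical ball is a \emph{bounded} tropical polyhedron — boundedness in $\torus{n}$ and the fact that the inequalities $x_i-x_j\le\text{const}$ are genuine tropical linear inequalities — so that the hypothesis of the quasi-polyhedron property is met; everything else is immediate.
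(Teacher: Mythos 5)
Your argument is correct and is essentially the paper's own: the corollary is stated as an immediate consequence of the preceding proposition together with the remark that bounded tropical quasi-polyhedra are tropical polyhedra, and your write-up simply makes that remark explicit by enclosing $\VR_S(s)$ in a large symmetric tropical ball and applying the quasi-polyhedron property. The verification that the ball is a bounded tropical polyhedron is exactly the small detail that needed checking, and you did it correctly.
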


Next we describe a class of (finite or infinite) sets with nice Voronoi regions.
\begin{definition}
  Let $r,R\in\RR\cup\{\infty\}$ such that $0<r<R$.
  A set $S\subset\cH$ is called an \emph{$(r,R)$-system} if $(s+r\triangle)\cap S=\{s\}$ for all $s\in S$ and $(x+R\triangle)\cap S\neq\emptyset$ for all $x\in\cH$.
\end{definition}
For $R=\infty$, we consider $R\triangle=\cH$, so an $(r,\infty)$-system imposes only a uniform lower bound on the distances between sites.
When $R$ is finite, the above definition agrees with \cite[Definition 3.1.4]{Voigt:2008} despite being formally different.
In that case, our version is equivalent because the asymmetric tropical distance is strongly equivalent to the Euclidean distance (i.e., there exist $\alpha,\beta>0$ such that $\alpha d_{L^2}(x,y)\leq d_{\triangle}(x,y)\leq \beta d_{L^2}(x,y)$ for all $x,y\in\cH$, where $d_{L^2}$ is the Euclidean distance).
Occasionally, $(r,R)$-systems are also called \enquote{Delone sets} in the literature; e.g., see \cite[loc.\ cit.]{Voigt:2008}.
Here we prefer the slightly more sterile terminology to avoid a confusion with the Delone complexes discussed in Section~\ref{sec:delaunay} below.
The following generalizes \cite[Lemma 4.6]{RRlattice}.
\begin{lemma}\label{lem:compact}
  For $R<\infty$ the tropical Voronoi regions of $(r,R)$-systems are bounded and thus compact.
\end{lemma}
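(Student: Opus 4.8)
The plan is to show that a Voronoi region of an $(r,R)$-system with $R<\infty$ is contained in a single translate of $-R\triangle$, which is a compact set; only the covering part of the $(r,R)$-condition enters, the packing part involving $r$ being irrelevant here. So fix a site $a\in S$ and let $x\in\VR_S(a)$ be arbitrary. The defining property of an $(r,R)$-system guarantees that $x+R\triangle$ meets $S$; pick $s\in S$ with $s\in x+R\triangle$, i.e.\ $d_\triangle(x,s)\le R$. Since $x$ lies in the Voronoi region of $a$, the chain $d_\triangle(x,a)\le d_\triangle(x,s)\le R$ holds, which rewrites as $a-x\in R\triangle$, equivalently $x\in a-R\triangle$.

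Hence $\VR_S(a)\subseteq a-R\triangle$. Recall that, under the identification $\torus{n}\cong\cH$, the set $\triangle$ is the image of the ordinary simplex $\conv\{e_1,\dots,e_n\}$, hence a genuine compact polytope; this is exactly where the hypothesis $R<\infty$ is used, since it is what makes $a-R\triangle$ bounded. Consequently every Voronoi region of $S$ is bounded.

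It remains to note closedness: $\VR_S(a)=\bigcap_{b\in S\setminus\{a\}}h(a,b)$ is an intersection of tropical halfspaces, each a closed subset of $\cH$, so $\VR_S(a)$ is closed, and a closed bounded subset of the finite-dimensional space $\cH$ is compact. There is no genuine obstacle in this argument; the only point requiring care is tracking the direction of the translate, namely that the ball $\{y : d_\triangle(x,y)\le R\}$ around $x$ equals $x+R\triangle$, whereas the set of points at directed distance $\le R$ \emph{to} $a$ is $a-R\triangle$.
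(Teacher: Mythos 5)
Your proof is correct, and it takes a genuinely different (and shorter) route than the paper. You apply the covering half of the $(r,R)$-condition at the arbitrary point $x\in\VR_S(a)$ itself: some site $s$ satisfies $d_\triangle(x,s)\le R$, so the Voronoi inequality gives $d_\triangle(x,a)\le d_\triangle(x,s)\le R$, i.e.\ $x\in a-R\triangle$, and hence $\VR_S(a)\subseteq a-R\triangle$, a compact translate of the inverted simplex; your bookkeeping of the asymmetry (the ball around $x$ is $x+R\triangle$, while the set of points at directed distance at most $R$ \emph{to} $a$ is $a-R\triangle$) is exactly right, and closedness plus finite dimension of $\cH$ finishes the compactness claim. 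The paper instead fixes $s=\0$, and for each coordinate $i$ uses the covering property to manufacture a site $t$ inside the cone $\smallSetOf{x}{x_i\le 0,\ x_j>0 \text{ for } j\neq i}$, then reads off a lower bound on $x_i$ from the single halfspace inequality $\max_{j\neq i}x_j\le x_i-t_i$ together with $\sum_j x_j=0$, obtaining a box $[-\delta,(n-1)\delta]^n$ whose size depends on the auxiliary sites chosen. Your argument buys an explicit, uniform quantitative statement (every region lies in $a-R\triangle$, equivalently $x_i\le a_i+R/n$ for all $i$, with a bound depending only on the covering radius $R$), and it makes transparent that only the covering condition is used; the paper's version is phrased entirely through the halfspace description $h(\0,t)$ that drives the rest of the section, at the cost of a less explicit constant. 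Both approaches are sound.
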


\begin{proof}
Let $S$ be an $(r,R)$-system in $\cH$ and $s\in S$.
Up to translation, we can assume that $s=\0$.
Select $i\in[n]$ arbitrary and consider the cone $C_i$ given by the equations $x_i \leq 0$ and $x_j > 0$ for all $j\neq i$.
Since $C_i$ is a full-dimensional convex cone, one can find $x\in C_i$ such that $x+R\triangle\subset C_i$.
Then there exists $t\in S\cap(x+R\triangle)$ because $S$ is an $(r,R)$-system.
In particular, $t$ is a site contained in $C_i$.

The equation of the tropical halfspace $h(\0,t)$ is $\max_{j\neq i}x_j\leq x_i-t_i$. From the fact that $\sum_{j\neq i}x_j=-x_i$, we obtain $-\frac{1}{n-1}x_i\leq\max_{j\neq i}x_j\leq x_i-t_i$ and thus $x_i\geq\frac{n-1}{n}t_i$ for every $x\in h(\0,t)$.
This restricts, particularly, to points of $\VR_S(\0)$, which is a subset of $h(\0,t)$.

We have shown that, for an arbitrary $i\in[n]$, the $i$-th coordinate $x_i$ is bounded uniformly from below for every $x\in\VR_S(\0)$.
In other words, there exists $\delta>0$ such that for every $x\in\VR_S(\0)$ and $i\in [n]$ we have $x_i\geq -\delta$.
Using again the property $\sum x_i=0$, we also obtain $x_i\leq(n-1)\delta$ for all $x\in\VR_S(\0)$ and $i\in [n]$.

Summing up, $\VR_S(\0)$ is a subset of the cube $[-\delta,(n-1)\delta]^n$, so it is bounded.
\end{proof}

It is a consequence of Lemma~\ref{lem:compact} that $(r,R)$-systems are necessarily infinite if $R$ is finite.
Thus, these form a quite restricted class of sets, whereas the analysis in \cite{Voigt:2008} admits arbitrary unbounded polyhedral Euclidean Voronoi regions for discrete sets.
We now describe a class of sets whose tropical Voronoi regions are always tropical polyhedra, even when they are unbounded.
To this end we need to introduce some notation.
For a subset $I$ of $[n]$ we consider the projection $\pi_I:\RR^n\rightarrow\RR^{|I|}$ mapping a point $x$ to $(x_i)_{i\in I}$, its entries with coordinates in $I$.

\begin{definition}\label{def:super-discrete}
  We call a set $S\subset\cH$ \emph{super-discrete} if for every $i\in[n]$ the projection $\pi_i(S)$ is a discrete subset of $\RR$. 
\end{definition}

A super-discrete set is, in particular, a discrete subset of $\cH$.
To see this, notice that every cube $[m,M]^n$ intersected with $\cH$ contains finitely many points of a super-discrete set; the final assertion can be shown inductively.
Examples of super-discrete sets include finite sets and rational lattices.
For irrational lattices it may happen that a projection onto one coordinate is dense in $\RR$; see Example~\ref{exmp:irr} below.
The set in Example~\ref{ex:nonpolyh} is discrete but not super-discrete: the projection onto the second coordinate is not a discrete set.

The following examples show that super-discrete sets and $(r,R)$-systems are different.
\begin{example}\label{exmp:ln}
  The sequence of sites $s_i=(\ln(i),-\ln(i))$ for $i\in\ZZ_{>0}$ forms a super-discrete set in $\torus{2}$, but $d_\triangle(s_i,s_{i+1})=2\ln((i+1)/i)$ tends to zero as $i$ goes to infinity.
  So it is not an $(r,R)$-system for any $0<r<R$.
\end{example}

\begin{example}\label{exmp:irr}
  Let $L$ be the lattice generated by $(1,1,-2)$ and $(0,\sqrt{2},-\sqrt{2})$, which is not rational.
  Then $\pi_2(L)=\SetOf{\alpha+\beta\sqrt{2}}{\alpha,\beta\in\ZZ}$ is dense in $\RR$ by Kronecker's density theorem; see \cite[Chapter~XXIII]{HardyWright:2008}.
  Hence, $L$ is not super-discrete; yet it is an $(r,R)$-system.
\end{example}

\begin{remark} \label{rmk:super-discrete}
  By definition, subsets of super-discrete sets are also super-discrete.
  Moreover, any projection $\pi_I(S)$ of a super-discrete set $S$ is super-discrete.
\end{remark}

An element $x$ of a set $G\subseteq\RR^n$ is called \emph{nondominated} if there is no $y\in G\setminus\{x\}$ such that $x\leq y$.
This notion appears in multicriteria optimization \cite{Ehrgott:2005}; see also \cite{MonCon} for a connection to tropical combinatorics.

\begin{lemma} \label{lemma:supdisc-fingen}
Let $G$ be a super-discrete subset of $\RR^n_{\geq 0}$.
Then there are finitely many nondominated points in $G$.
\end{lemma}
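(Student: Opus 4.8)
The plan is to deduce this from the fact that a super-discrete subset of $\RR^n_{\geq 0}$ is well-quasi-ordered by the coordinatewise order $\leq$, which is a version of Dickson's lemma adapted to our setting. The reduction to that fact is immediate: if $x,y$ are two \emph{distinct} nondominated points of $G$ and $x\leq y$, then $y\in G\setminus\{x\}$ dominates $x$, a contradiction, and symmetrically $y\leq x$ is impossible; hence the set $N$ of nondominated points is an antichain in $(G,\leq)$. Since a well-quasi-order contains no infinite antichain, $N$ is finite.

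So the real work is to show that every super-discrete $H\subseteq\RR^n_{\geq 0}$ is well-quasi-ordered, and I would do this by induction on $n$. For $n=1$ the set $\pi_1(H)=H$ is a discrete subset of $\RR$ bounded below by $0$, hence has no accumulation point, hence meets every bounded interval in finitely many points; so it has order type at most $\omega$ and is in particular well-ordered and totally ordered, and a well-founded chain is trivially a well-quasi-order. For the inductive step, take a sequence $(y^{(k)})_{k\geq 1}$ in $H$. Its sequence of last coordinates lies in $\pi_n(H)$, which by the base-case argument is well-ordered; taking successive minima of tails produces a strictly increasing index sequence $k_1<k_2<\cdots$ along which $(y^{(k_i)}_n)_i$ is nondecreasing. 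Now $(\pi_{[n-1]}(y^{(k_i)}))_i$ is a sequence in the super-discrete set $\pi_{[n-1]}(H)\subseteq\RR^{n-1}_{\geq 0}$ (a projection of a super-discrete set is super-discrete, see Remark~\ref{rmk:super-discrete}), so by the inductive hypothesis there are indices $i<j$ with $\pi_{[n-1]}(y^{(k_i)})\leq\pi_{[n-1]}(y^{(k_j)})$; combined with $y^{(k_i)}_n\leq y^{(k_j)}_n$ this yields $y^{(k_i)}\leq y^{(k_j)}$, which is exactly what well-quasi-orderedness demands.

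The main obstacle — really the only subtlety — is that $G$ lives in $\RR^n_{\geq 0}$ rather than in $\NN^n$, so Dickson's lemma cannot be quoted verbatim; this is precisely what super-discreteness together with nonnegativity repairs, since each coordinate projection then has order type at most $\omega$. One could alternatively phrase the whole argument that way: fix order-embeddings $\pi_i(G)\hookrightarrow\NN$, assemble them into an order-embedding $G\hookrightarrow\NN^n$, and invoke Dickson's lemma for $\NN^n$ directly. The extraction of a monotone subsequence from a sequence in a well-ordered set in the inductive step is the one place where well-foundedness of the coordinate sets, as opposed to mere discreteness, is used.
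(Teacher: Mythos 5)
Your argument is correct, but it takes a different route from the paper. The paper's proof is a short reduction: since each $\pi_i(G)$ is a discrete subset of $\RR_{\geq 0}$ it is well-ordered of type at most $\omega$, so one can choose order-preserving embeddings $\rho_i:\pi_i(G)\to\ZZ_{>0}$, assemble them into an injective order-preserving map $\rho:G\to\ZZ_{>0}^n$, and then quote Dickson's lemma for $\rho(G)$ -- exactly the alternative you sketch in your last paragraph. Your main argument instead reproves the Dickson-type statement from scratch: you observe that the nondominated points form an antichain, and you show by induction on $n$ (extracting a subsequence with nondecreasing last coordinate, then applying the inductive hypothesis to the projection, which is super-discrete by Remark~\ref{rmk:super-discrete}) that a super-discrete subset of $\RR^n_{\geq 0}$ is well-quasi-ordered, hence has no infinite antichain. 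What your route buys is self-containedness and a slightly more careful final step: Dickson's lemma is usually stated for \emph{minimal} elements, so passing to the finitely many \emph{maximal} (nondominated) points really does go through the no-infinite-antichain formulation that you make explicit, whereas the paper leaves this implicit; what the paper's route buys is brevity, outsourcing the induction to the cited reference. Both proofs hinge on the same key fact, namely that super-discreteness together with nonnegativity forces each coordinate projection to have order type at most $\omega$. One small wording caveat: your base case says a discrete subset of $\RR$ bounded below has no accumulation point; this is true under the paper's convention that \enquote{discrete} means locally finite (no finite accumulation points), which is how discreteness is used throughout the paper, but it would fail for merely topologically discrete sets such as $\smallSetOf{1/k}{k\in\ZZ_{>0}}$, so it is worth stating that convention explicitly.
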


\begin{proof}
  For all $i\in[n]$ the sets $\pi_i(G)$ are discrete and thus countable.
  So $\pi_i(G)$ is a countable subset of $\RR_{\geq 0}$, whence it admits a well-ordering.
  Thus, we can pick an order-preserving embedding $\rho_i:\pi_i(G)\rightarrow\ZZ_{>0}$ for every $i\in[n]$.

  Therefore, the map $\rho:G\rightarrow\ZZ_{>0}^n$, $x\mapsto(\rho_1(\pi_1(x)),\dots,\rho_n(\pi_n(x)))$ is injective and order-preserving.
  In particular, we have a bijection between the nondominated points of $G$ and the nondominated points of $\rho(G)$.
  But $\rho(G)$ has finitely many nondominated points by Dickson's lemma \cite[Theorem 2.1.1]{HerzogHibi:2011}.
\end{proof}

Dickson's lemma has a prominent role in commutative algebra.
That it occurs here is not a coincidence; see Section~\ref{sec:delaunay} below.
For now we are content with the following combinatorial result.

\begin{theorem} \label{th:polySuperDisc}
  If $S$ is a super-discrete subset of $\cH$, then all the cells of the Voronoi diagram $\VD(S)$ are tropical polyhedra.
\end{theorem}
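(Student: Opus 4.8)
The plan is to reduce the claim, in two steps, to a combinatorial finiteness statement of the same flavour as Lemma~\ref{lemma:supdisc-fingen}, which is powered by Dickson's lemma. First I would reduce to a single region. An element of $\VD(S)$ is an intersection $C=\bigcap_{a\in T}\VR_S(a)$ with $\emptyset\neq T\subseteq S$; if $C=\emptyset$ there is nothing to prove, since $\emptyset$ is a tropical polyhedron. Otherwise pick $x\in C$. Each $a\in T$ minimises $d_\triangle(x,\cdot)$ over $S$, so $\delta:=d_\triangle(x,a)$ is independent of $a\in T$ and equals $\min_{b\in S}d_\triangle(x,b)$. Thus each $a\in T$ lies in the ball $x+\delta\triangle=\smallSetOf{y}{d_\triangle(x,y)\le\delta}$, so $T\subseteq S\cap(x+\delta\triangle)$, which is finite because $S$ is discrete and $x+\delta\triangle$ is bounded ($\triangle$ is a bounded polytope in $\cH$). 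Hence $C$ is a finite intersection of Voronoi regions, and since a finite intersection of tropical halfspaces is a tropical polyhedron, it is enough to show that each single region $\VR_S(a)$ is a tropical polyhedron.

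Since $d_\triangle$ is translation invariant and translation preserves super-discreteness (because $\pi_i(S-a)=\pi_i(S)-a_i$), I may assume $a=\0\in S$. By Proposition~\ref{prop:VR-pair}, for $b\in S\setminus\{\0\}$,
\[
  h(\0,b) \ = \ \bigSetOf{x\in\cH}{\bigoplus_{i\in P(b)} x_i \ \le \ \bigoplus_{i\in N(b)}(-b_i)\odot x_i} \enspace ,
\]
where $P(b)=\smallSetOf{i\in[n]}{b_i>0}$ and $N(b)=[n]\setminus P(b)$ are both nonempty (as $b\in\cH\setminus\{\0\}$ has a positive and a negative coordinate). The next step is a redundancy criterion: if $b,b'\in S\setminus\{\0\}$ have the same sign pattern $P(b)=P(b')=:P$ and $b_i\le b'_i$ for every $i\in N:=[n]\setminus P$, then $\max_{i\in N}(x_i-b'_i)\le\max_{i\in N}(x_i-b_i)$ for all $x$, so $h(\0,b')\subseteq h(\0,b)$. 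Hence, among all sites with a fixed sign pattern $P$, only those realising a \emph{maximal} value of $(b_i)_{i\in N}$ can contribute a non-redundant halfspace to $\VR_S(\0)=\bigcap_{b\in S\setminus\{\0\}}h(\0,b)$.

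To finish, I group $S\setminus\{\0\}$ according to the finitely many sign patterns $P$ (nonempty proper subsets of $[n]$). Fix $P$, put $S_P=\smallSetOf{b\in S\setminus\{\0\}}{P(b)=P}$ and $N=[n]\setminus P$. The projection $\pi_N(S_P)$ is super-discrete (its coordinate projections sit inside those of $S$; cf.\ Remark~\ref{rmk:super-discrete}) and is contained in $\prod_{i\in N}(-\infty,0]$, hence bounded above in every coordinate. Rerunning the proof of Lemma~\ref{lemma:supdisc-fingen}---the order-preserving embedding now lands in a subset of $\ZZ^N$ that is bounded above, where Dickson's lemma yields finitely many maximal elements and a finite-box argument shows that every element lies below a maximal one---I obtain finitely many sites $b^{(P,1)},\dots,b^{(P,k_P)}\in S_P$ such that every $b\in S_P$ satisfies $b_i\le b^{(P,j)}_i$ on $N$ for some $j$. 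The redundancy criterion gives $\bigcap_{b\in S_P}h(\0,b)=\bigcap_{j=1}^{k_P}h(\0,b^{(P,j)})$, and intersecting over the finitely many $P$ exhibits $\VR_S(\0)$ as a finite intersection of tropical halfspaces, i.e.\ a tropical polyhedron.

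The main obstacle is exactly this finiteness step, and the reason it needs Dickson's lemma rather than a naive argument is that $S$ itself lies in the hyperplane $\cH=\smallSetOf{x}{x_1+\dots+x_n=0}$, which is an antichain for the coordinatewise order; no domination among the sites is visible until one projects onto the coordinates indexed by $N(b)$, and it is only there---where the projection is bounded above---that Dickson's lemma applies.
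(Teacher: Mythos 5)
Your argument is correct, and it is essentially the paper's proof: the same decomposition of $S\setminus\{\0\}$ by sign pattern (the orthants of the arrangement $\smallSetOf{x_i=0}{i\in[n]}$ in $\cH$), the same observation that $h(\0,b)$ depends only on the pattern and on the projection of $b$ to its non-positive coordinates, and the same Dickson-powered finiteness. Two points where your write-up differs are worth recording. First, you explicitly reduce an arbitrary cell $\bigcap_{a\in T}\VR_S(a)$ to a finite intersection of regions via the ball argument ($T\subseteq S\cap(x+\delta\triangle)$ is finite by discreteness); the paper's proof only treats single regions, so this is a genuine completion of the statement about all cells. Second, your domination direction is the right one: the binding halfspaces come from sites whose coordinates on $N$ are \emph{maximal} (closest to $\0$), i.e.\ from the nondominated points of $\pi_N(S_P)$, whereas the paper's text says \enquote{nondominated points of $-\pi_J(S_{I,J})$}, which with its definition of nondominated selects the opposite extremes (those give the \emph{weaker}, redundant inequalities; compare $\0$, $(2,-1,-1)$, $(4,-2,-2)$ in $\cH\subset\RR^3$). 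Because of this orientation you cannot quote Lemma~\ref{lemma:supdisc-fingen} verbatim: you need finitely many maximal elements of a super-discrete subset of $\prod_{i\in N}(-\infty,0]$ \emph{and} that every element lies below a maximal one. Your finite-box argument (the up-set of $\pi_N(b)$ sits in $\prod_{i\in N}[b_i,0]$, which meets the set in finitely many points) delivers the coinitiality, and antichain finiteness à la Dickson (after negating, or via the integer order-embedding) gives the finiteness, so the variant you sketch is sound. One cosmetic remark: the paper also notes that infinitely many sites may share the same projection $\pi_N(b)$ but then define the same halfspace; you handle this implicitly by picking one representative $b^{(P,j)}$ per maximal projection, which is fine since $h(\0,b)$ depends only on $P$ and $\pi_N(b)$.
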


\begin{proof}
Let $s\in S$ be arbitrary.
After a translation, we can assume that $s=\0$.
We prove that $\VR_{S}(\0)$ is a tropical polyhedron by showing that there exists a finite subset $T$ of $S\setminus\{\0\}$ such that $\VR_{S}(\0)=\bigcap_{t\in T}h(\0,t)$.

Consider the hyperplane arrangement $\SetOf{x_i=0}{i\in [n]}$ in $\cH$.
Its maximal cells are in bijection with the $2^n-2$ ordered partitions of $[n]$ in two sets.
A partition $I\sqcup J=[n]$ corresponds to the \enquote{half-open} cone given by the hyperplanes $x_i > 0$ for $i\in I$ and $x_j\leq 0$ for $j\in J$.
We call $(I,J)$ the signature of the cone.

Denote by $S_{I,J}$ the points of $S\setminus\{\0\}$ that are contained in the cone with signature $(I,J)$.
For a point $a\in S_{I,J}$, the halfspace $h(\0,a)$ is given by the equation $\max_{i\in I}x_i\leq\max_{j\in J}(-a_j+x_j)$.

Let $b\in S_{I,J}$ such that $-\pi_J(b)\geq -\pi_J(a)$.
Then we have $\max_{j\in J}(-a_j+x_j)\leq\max_{j\in J}(-b_j+x_j)$ for every $x\in\cH$.
This implies the inclusion $h(\0,a)\subseteq h(\0,b)$.
In this case, $h(\0,b)$ does not contribute to the intersection $\bigcap_{s\in S\setminus\{\0\}}h(\0,s)$.
Therefore, the significant halfspaces come from the nondominated points of $-\pi_J(S_{I,J})$.
By Lemma \ref{lemma:supdisc-fingen}, there are only finitely many nondominated points, as $\pi_J(S_{I,J})$ is also super-discrete; we used Remark \ref{rmk:super-discrete}.

There could be infinitely many points that project onto a nondominated point of $-\pi_J(S_{I,J})$, but all of them give the same halfspace.
Indeed, from the above observations, $h(\0,a)=h(\0,b)$ is equivalent to $\pi_J(a)=\pi_J(b)$ for points $a,b\in S_{I,J}$.
Hence, we can select a finite subset $T_{I,J}$ of $S_{I,J}$ such that $\bigcap_{t\in T_{I,J}}h(\0,t)=\bigcap_{s\in S_{I,J}}h(\0,s)$.

All in all, we have $\VR_S(\0)=\bigcap_{\emptyset\neq I\subset [n]}\bigcap_{t\in T_{I,[n]\setminus I}}h(\0,t)$, which is an intersection of finitely many tropical halfspaces.
\end{proof}

We close this section with examples of asymmetric tropical Voronoi diagrams for a family of lattices.
These are interesting for several reasons; e.g., they provide counter-examples to several claims made in \cite{RRlattice}.

\begin{figure}[ht]
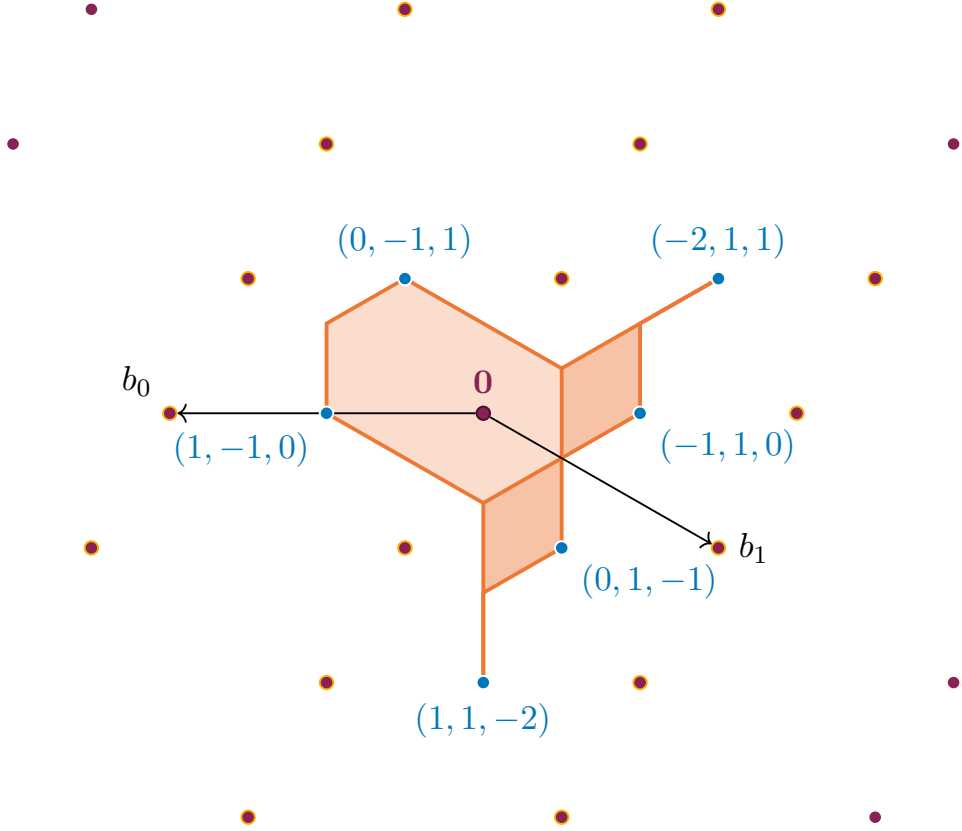

  \centering
  \includestandalone[width=.8\linewidth]{OtherLattice2}
  \caption{The Voronoi region $\VR(\0)$ in the lattice $L_2(2,1,1)$.
    The parts in a darker shade of orange are also contained in the Voronoi regions of $b_0+b_1=(1,0,-1)$ and $-b_0-b_1=(-1,0,1)$, respectively.
    The six blue points are the tropical vertices of $\VR(\0)$}
  \label{fig:RRlattice-counterexample}
\end{figure}

\begin{example} \label{exmp:lattice}
  In \cite[\textsection 6.4]{RRlattice}, the authors construct a lattice $L_2(\alpha,\gamma,\eta)$ in $\cH\subset\RR^3$ with basis $b_0=(\alpha,-\alpha,0)$ and $b_1=(-\gamma,\gamma+\eta,-\eta)$.
  Here the three parameters $\alpha,\gamma,\eta$ are positive integers with $\gamma<\alpha\leq\gamma+\eta$.
  We will show that the lattice $L_2(\alpha,\gamma,\eta)$ is graphical if $\alpha$ divides $\gamma+\eta$.
  To this end consider the $3{\times}3$-matrix
  \[
    Q \ = \ \begin{bmatrix}
      \alpha+\eta & -\alpha & -\eta \\
      -\alpha & \alpha & 0 \\
      -\eta & 0 & \eta
    \end{bmatrix} \enspace ,
  \]
  which is the Laplacian matrix of a multi-tree on three nodes; the first node is adjacent to the other two, with multiplicities $\alpha$ and $\eta$.
  We compute $(\alpha+\eta,-\alpha,-\eta)=\left(1+\frac{\gamma+\eta}{\alpha}\right)b_0+b_1$ and $(-\alpha,\alpha,0)=-b_0$.
  That invertible linear transformation of the basis is unimodular when $\alpha$ divides $\gamma+\eta$.
  So this furnishes a counter-example to \cite[Proposition~6.29]{RRlattice}, where it was claimed that $L_2(\alpha,\gamma,\eta)$ is not graphical.
  Observe that $\gamma$ does not occur in the matrix $Q$.

  Figure~\ref{fig:RRlattice-counterexample} displays the lattice $L_2(2,1,1)$ and the Voronoi region of the origin.
  This is a $\max$-tropical polytope with the six tropical vertices
  \begin{equation}\label{eq:RRlattice-counterexample:vertices}
    (1,-1,0) \,,\ \underline{(1,1,-2)}\,,\ \underline{(0,1,-1)}\,,\ (-1,1,0)\,,\ (-2,1,1)\,,\ (0,-1,1) \enspace ,
  \end{equation}
  in cyclic order.
  These are are local maxima of the distance function from the origin (called \enquote{critical points} in \cite{RRlattice}).
  Their reflection at $\0$ is not a translation of those points, whence that lattice is not strongly reflection invariant.
  This refutes \cite[Theorem~6.1]{RRlattice}.
  The same example also disproves \cite[Theorem~6.28]{RRlattice}:
  the lattice $L_2(2,1,1)$ is defined by a multi-tree on three vertices, but it is not strongly reflection invariant.

  Moreover, the same example also shows that \cite[Theorem~6.9 (ii)]{RRlattice} is false:
  the two underlined tropical vertices in \eqref{eq:RRlattice-counterexample:vertices} are missing in that statement.
  Notice that \cite[Theorem~6.9 (ii)]{RRlattice} is also used in the proof of \cite[Theorem~4]{LapLatticeGraph}.
\end{example}

\section{Power diagrams over fields of Puiseux series}
\label{sec:power}
\noindent
Our next goal is to relate asymmetric tropical Voronoi diagrams with the ordinary polyhedral geometry over ordered fields.
To this end, we consider the field of generalized dual Puiseux series $\hahnseries{\RR}{t}^*$; see \cite[\textsection 2.7]{ETC}.
Its elements are the formal power series in $t$, with real coefficients, such that the exponents form a strictly decreasing sequence of reals without finite accumulation points.
That field is ordered, and it is equipped with the dual valuation map
\[
  \val^* : \hahnseries{\RR}{t}^* \to \RR \,,\ \ \sum_{k=0}^\infty \alpha_k t^{r_k} \mapsto r_0 \enspace,
\]
where $\alpha_0\neq 0$ and $r_0>r_1>\cdots$, which sends a generalized dual Puiseux series to its highest exponent.
The sign of the generalized Puiseux series $\sum \alpha_k t^{r_k}$ is the sign of the leading coefficient $\alpha_0$.
The map $\val^*$ is surjective onto the reals, and it preserves the ordering, if restricted to non-negative generalized dual Puiseux series:
\[
  \val^*(\bm x) \leq \val^*(\bm y) \quad \text{if and only if} \quad \bm x \leq \bm y
\]
for $\bm x, \bm y\in\hahnseries{\RR}{t}^*_{\geq 0}$.
Notice that the ordinary Puiseux series with real coefficients have rational exponents, which are rising instead of falling, and the usual valuation map reverses the order, since it picks the lowest degree.
The compatibility with the order relations makes the generalized dual Puiseux series more convenient for our purposes.
In the sequel we abbreviate $\KK=\hahnseries{\RR}{t}^*$.

Pick a finite set of sites $\bm S\subset \KK^n$ and an arbitrary \emph{weight function} $w:\bm S\to\KK_{\geq 0}$.
Then our setup gives rise to the \emph{farthest power region} of $\bm a\in\bm S$, with respect to $w$, which is the set
\[
  \PR_{\bm S}^w(\bm a) \ := \ \bigSetOf{\bm x\in\KK_{\geq 0}^n}{\|\bm x-\bm a\|^2-w(\bm a) \geq \|\bm x-\bm b\|^2-w(\bm b) \text{ for all } \bm b\in \bm S} \enspace .
\]
The \emph{farthest power diagram} of $\bm S$ with respect to $w$ is the set of all intersections of the farthest power regions of $\bm S$, partially ordered by inclusion.
We denote it $\PD(S)$.
Farthest power diagrams are also called \enquote{maximal power diagrams} \cite{PowDiag}.
Of particular interest for our investigation is the map
\[
  \|\cdot\|:\KK^n \to \KK_{\geq 0} \,,\ \bm x \mapsto \sqrt{\sum_{i\in[n]} \bm x_i^2} \enspace ,
\]
which is called the \enquote{Euclidean norm} in \cite[p. 83]{BasuPollackRoy:2006}.
It is well defined because the generalized dual Puiseux series form a real closed field \cite{Markwig:2010}.
Yet, this is not a norm on the infinite-dimensional real vector space $\KK^n=(\hahnseries{\RR}{t}^*)^n$ in the usual sense, as its values lie in a proper superfield of the reals.

We abbreviate $\bm h^w(\bm a,\bm b)=\PR_{\{\bm a,\bm b\}}^w(\bm a)$.
Occasionally, we will also call the intersection $\bisector(\bm a,\bm b)=\bm h^w(\bm a,\bm b)\cap\bm h^w(\bm b, \bm a)$ a \emph{(two point) bisector}.
\begin{proposition}\label{prop:power-region}
  For the special weight function $w(\bm a)=\|\bm a\|^2$ the farthest power region $\bm h^w(\bm a,\bm b)$ for two sites is a linear halfspace in $\KK^n$.
  Consequently, for an arbitrary finite set of sites, $\bm S$, the power region $\PR_{\bm S}^w(\bm a)$ is a polyhedral cone.
\end{proposition}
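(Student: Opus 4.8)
The plan is the classical linearization of power cells, carried out verbatim over the ordered field $\KK=\hahnseries{\RR}{t}^*$. First I would record that $\|\cdot\|^2$ agrees with the ordinary quadratic form: for any $\bm y\in\KK^n$ the element $\sum_{i\in[n]}\bm y_i^2$ is a sum of squares, hence non-negative in the ordered field $\KK$, so its square root exists and squares back to it because $\KK$ is real closed \cite{Markwig:2010}; thus $\|\bm y\|^2=\sum_{i\in[n]}\bm y_i^2$, with no square roots involved. Expanding then gives $\|\bm x-\bm a\|^2=\|\bm x\|^2-2\langle\bm x,\bm a\rangle+\|\bm a\|^2$, where $\langle\cdot,\cdot\rangle$ denotes the standard bilinear form on $\KK^n$.

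Next I would substitute $w(\bm a)=\|\bm a\|^2$ and $w(\bm b)=\|\bm b\|^2$ into the defining inequality of $\bm h^w(\bm a,\bm b)$, namely $\|\bm x-\bm a\|^2-w(\bm a)\geq\|\bm x-\bm b\|^2-w(\bm b)$. The three quadratic terms $\|\bm x\|^2$, $\|\bm a\|^2$, $\|\bm b\|^2$ all cancel, leaving the single homogeneous linear inequality $\langle\bm x,\bm b-\bm a\rangle\geq 0$. Hence $\bm h^w(\bm a,\bm b)$ is exactly the part of the ambient domain $\KK_{\geq 0}^n$ cut out by one homogeneous linear inequality, i.e.\ a linear halfspace (the trivial case $\bm a=\bm b$ giving the whole domain).

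For the second claim I would simply intersect. By definition $\PR_{\bm S}^w(\bm a)=\bigcap_{\bm b\in\bm S}\bm h^w(\bm a,\bm b)$, so it is the intersection of $\KK_{\geq 0}^n=\bigcap_{i\in[n]}\{\bm x_i\geq 0\}$ with the finitely many homogeneous linear halfspaces $\{\langle\bm x,\bm b-\bm a\rangle\geq 0\}$, $\bm b\in\bm S$; a finite intersection of homogeneous linear halfspaces is a polyhedral cone. I do not expect a genuine obstacle: the only step that is not pure bookkeeping is the identity $\|\bm y\|^2=\sum_{i\in[n]}\bm y_i^2$ over $\KK$, and that is immediate from real closedness. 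Everything else is the same computation that produces ordinary power cells in the real case, and it is insensitive to the fact that $\KK$ is a proper ordered extension of $\RR$.
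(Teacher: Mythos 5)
Your proposal is correct and follows essentially the same route as the paper: substitute $w=\|\cdot\|^2$, expand the quadratic form so the terms $\|\bm x\|^2$, $\|\bm a\|^2$, $\|\bm b\|^2$ cancel, obtain the homogeneous linear inequality $\sum_i \bm a_i\bm x_i\leq\sum_i\bm b_i\bm x_i$, and intersect finitely many such halfspaces (together with $\KK_{\geq 0}^n$) to get a polyhedral cone. Your extra remark that only the quadratic-form identity is needed, not real closedness, matches the paper's own comment immediately after its proof.
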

\begin{proof}
  We have $\|\bm x-\bm a\|^2-\|\bm a\|^2 = \|\bm x\|^2-2\sum_{i\in[n]}\bm x_i \bm a_i$, whence the inequality $\|\bm x-\bm a\|^2-w(\bm a) \geq \|\bm x-\bm b\|^2-w(\bm b)$ is equivalent to $\sum_{i\in[n]}\bm a_i \bm x_i \leq \sum_{i\in[n]}\bm b_i \bm x_i$.
  For general $\bm S$ the power region is then described by finitely many homogeneous linear inequalities.
\end{proof}
Observe that the proof above only exploits the fact that $\|\cdot\|^2$ is a quadratic form.
The real-closedness of the field $\KK$ is irrelevant here.
In our power diagram notation we usually omit the upper index \enquote{$w$} if $w=\|\cdot\|^2$.

To see $d_\triangle$ as a distance function requires to pass from $\RR^n$ to the quotient $\torus{n}$.
Yet here we want to work in a tropically inhomogeneous setting.
This will allow us to state our first main result in a particularly concise form.
In $\KK^n$ we consider
\[
  \bm \cH \ := \bigSetOf{\bm x\in\KK_{>0}^n}{ \bm x_1 \bm x_2\cdots \bm x_n = 1 } \enspace ,
\]
which is the intersection of an affine algebraic hypersurface over $\KK$ with the positive orthant.
In differential geometry the hypersurface $\bm\cH$ occurs as a hyperbolic affine hypersphere \cite[Example 3.1]{AffDiffGeomHypsurf}.
This tropicalizes to the tropical hypersurface $\val^*(\bm \cH)=\cH$.
Further, the ray $\KK_{\geq 0} \cdot \bm x$, for $\bm x\in\KK_{> 0}^n$, intersects $\bm\cH$ in a unique point.
We obtain a commutative diagram, where the horizontal maps are embeddings and canonical projections, respectively:
\[
  \begin{tikzcd}
    \bm\cH \ar[r, hook] \ar[d, "\val^*"] & \KK_{>0}^n \ar[r, twoheadrightarrow] \ar[d, "\val^*"] & \KK_{>0}^n/\KK_{>0} \ar[d, "\val^*"]  \\
    \cH \ar[r, hook] & \RR^n \ar[r, twoheadrightarrow] & \torus{n}
  \end{tikzcd}
\]

\begin{lemma} \label{lemma:polyPR}
  Let $S\subset\cH$ be a super-discrete set of sites in general position, and let $\bm S\subset\bm\cH$ be a lifted point configuration such that $\val^*:\bm S\to -S$ is bijective.
  Then the cells of $\PD(\bm S)$ are polyhedral.
\end{lemma}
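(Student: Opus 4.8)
The plan is to transfer the finite-generation argument from Theorem~\ref{th:polySuperDisc} to the power-diagram side via the commutative diagram relating $\bm\cH$ and $\cH$. By Proposition~\ref{prop:power-region}, for the weight $w=\|\cdot\|^2$ each two-site region $\bm h(\bm a,\bm b)$ is the linear halfspace $\sum_i \bm a_i \bm x_i \le \sum_i \bm b_i \bm x_i$ in $\KK^n$, and hence $\PR_{\bm S}^w(\bm a)=\bigcap_{\bm b\in\bm S\setminus\{\bm a\}} \bm h(\bm a,\bm b)$ is an intersection of (possibly infinitely many) such halfspaces. If $\bm S$ were finite this would immediately be a polyhedral cone; the point of the lemma is to show that even though $\bm S$ is infinite, only finitely many of these halfspaces are relevant, so the cone is polyhedral. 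Fix a site $\bm a\in\bm S$; after rescaling along the ray $\KK_{>0}\cdot\bm x$ we may work in $\bm\cH$, and by translation-type symmetry we may think of $\bm a$ as the distinguished point with $\val^*(\bm a)=\0$.

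The key observation is that the halfspace $\bm h(\bm a,\bm b)$ depends only on the coordinatewise comparison of the exponent vectors. Concretely, write $-S=\val^*(\bm S)$ with the distinguished point $\0$, and partition the other sites according to which coordinates of $\bm b$ beat the corresponding coordinates of $\bm a$, i.e.\ according to the signature $(I,J)$ of the sign pattern of $\bm b-\bm a$ in $\KK^n$ --- this is exactly the combinatorial data used in the proof of Theorem~\ref{th:polySuperDisc}, now pulled back through $\val^*$. Within one signature class, if two sites $\bm b,\bm b'$ satisfy $\bm b_j \le \bm b'_j$ for all $j\in J$ (equivalently, after taking $\val^*$, the corresponding dominance of the exponent vectors $\pi_J$ of $-S$, using that $\val^*$ preserves order on $\KK_{\ge 0}$), then $\bm h(\bm a,\bm b)\subseteq \bm h(\bm a,\bm b')$, so the inner halfspace is redundant. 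Hence only the $\KK$-lifts of the nondominated points of $-\pi_J(S_{I,J})$ contribute. Applying Lemma~\ref{lemma:supdisc-fingen} to the super-discrete set $\pi_J(S_{I,J})\subset\RR^{|J|}_{\ge 0}$ (super-discrete by Remark~\ref{rmk:super-discrete}), there are only finitely many such nondominated points; and as in Theorem~\ref{th:polySuperDisc}, all lifts projecting to the same nondominated exponent vector give the same halfspace. Summing over the finitely many signatures $(I,J)$ yields a finite subset $\bm T\subset\bm S$ with $\PR_{\bm S}^w(\bm a)=\bigcap_{\bm t\in\bm T}\bm h(\bm a,\bm t)$, a polyhedral cone; intersecting over subfamilies shows every cell of $\PD(\bm S)$ is polyhedral.

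The main obstacle is the comparison step: over $\KK$ one must be careful that the dominance relation $\bm b_j\le \bm b'_j$ in $\KK_{>0}$ — which is what actually forces $\bm h(\bm a,\bm b)\subseteq\bm h(\bm a,\bm b')$ via $\max_{j\in J}\bm b_j\bm x_j \le \max_{j\in J}\bm b'_j\bm x_j$ for $\bm x\in\KK_{>0}^n$ — is correctly matched with the exponent dominance that Lemma~\ref{lemma:supdisc-fingen} controls. Here the general-position hypothesis on $S$ and the order-preserving property of $\val^*$ on $\KK_{\ge 0}$ do the work: distinct sites have pairwise distinct coordinates, so the sign pattern (signature) of $\bm b-\bm a$ is determined by the signs of the leading terms, i.e.\ by the coordinatewise order of $-S$, and within a fixed signature the $\KK$-dominance on the $J$-coordinates is equivalent to the dominance of the corresponding exponents. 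Once this dictionary is in place, the finiteness is a verbatim repetition of the argument in Theorem~\ref{th:polySuperDisc}. A minor additional point to check is that the homogeneous (conical) description over $\KK_{\ge 0}^n$ is the correct ambient setting, so that "polyhedral" means "finite intersection of linear halfspaces"; this is guaranteed by Proposition~\ref{prop:power-region}.
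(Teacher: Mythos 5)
Your overall plan coincides with the paper's: fix a site, stratify the remaining sites by signature cones, invoke Remark~\ref{rmk:super-discrete} and Lemma~\ref{lemma:supdisc-fingen} to get finitely many relevant sites per signature, and conclude that each power region is cut out by finitely many halfspaces. The gap sits exactly in the step you flag as the main obstacle: the claim that, within a fixed signature $(I,J)$, dominance in the $J$-coordinates alone, $\bm b_j\le\bm b'_j$ for all $j\in J$, forces $\bm h(\bm a,\bm b)\subseteq\bm h(\bm a,\bm b')$ \enquote{via $\max_{j\in J}\bm b_j\bm x_j\le\max_{j\in J}\bm b'_j\bm x_j$}. That max-comparison is a tropical statement; over $\KK$ the region $\bm h(\bm a,\bm b)$ is the linear halfspace $\sum_i\bm a_i\bm x_i\le\sum_i\bm b_i\bm x_i$, and its containment relations also depend on the $I$-coordinates of $\bm b$. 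The tropical halfspace $h(s,a)$ from Proposition~\ref{prop:VR-pair} has the special feature that its $I$-side, $\max_{i\in I}(x_i-s_i)$, does not involve $a$ at all; this is precisely what makes the pure $J$-dominance argument of Theorem~\ref{th:polySuperDisc} work, and it has no counterpart for linear inequalities. Concretely, already over $\RR$ and on the hypersurface: with $\bm a=(1,1)$, $\bm b=(10,\tfrac1{10})$, $\bm b'=(2,\tfrac12)$ the signatures agree and $\bm b_2\le\bm b'_2$, yet $\bm h(\bm a,\bm b)=\bigl\{\bm x\in\RR_{\ge0}^2 : \bm x_2\le 10\,\bm x_1\bigr\}\not\subseteq\bigl\{\bm x\in\RR_{\ge0}^2 : \bm x_2\le 2\,\bm x_1\bigr\}=\bm h(\bm a,\bm b')$. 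So the \enquote{verbatim repetition} of the tropical redundancy argument is not available, and weak $\KK$-dominance on the $J$-coordinates is not the right hypothesis either.

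What the paper actually does at this point is genuinely non-archimedean. For an omitted site $u$ with signature $(K,L)$ it uses general position to choose a retained site $t\in T\cap C_{K,L}$ with $t_j>u_j$ \emph{strictly} for every $j\in L$, rewrites both linear inequalities with positive coefficients on either side, and compares valuations: the $K$-side coefficients $\bm s_i-\bm t_i$ and $\bm s_i-\bm u_i$ have the same valuation $-s_i$, and the strict gap $\max_{j\in L}(-t_j+x_j)<\max_{j\in L}(-u_j+x_j)$ produced by the strict dominance turns the tropical comparison into a strict inequality of valuations, hence into the desired inequality in $\KK$, irrespective of the lower-order terms of the (arbitrary) lifts; this is the content of \eqref{eq:val1} and \eqref{eq:val2}. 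The strictness supplied by general position is indispensable: with valuation ties the containment can fail, as the real example above (trivial valuation) shows. Your proposal contains neither the strict-dominance selection of $t$ nor this leading-term comparison, so the key containment $\bm h(\bm s,\bm t)\subseteq\bm h(\bm s,\bm u)$ --- and with it the polyhedrality of the cells --- is left unproved, and the substitute mechanism you offer is false as stated.
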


\begin{proof}
Consider $I\sqcup J=[n]$ an ordered partition of $[n]$ and $C_{I,J}$ the open polyhedron given by the equations $x_i>s_i$ for $i\in I$ and $x_j<s_j$ for $j\in J$. The set $-\pi_J(C_{I,J}\cap S)$ has finitely many nondominated points due to $S$ being super-discrete and Lemma \ref{lemma:supdisc-fingen}.
Consider $S_{I,J}$ the set of points in $C_{I,J}\cap S$ that project to the nondominated points of $-\pi_J(C_{I,J}\cap S)$.
By general position no two points project onto the same nondominated point, so $S_{I,J}$ is finite.

Let $T$ be the union of all the sets $S_{I,J}$ over all ordered partitions $I\sqcup J=[n]$.
The set $T$ is finite and contains all the Voronoi neighbors of $s$, due to general position.
The last condition implies
\[\VR_S(s) \ = \ \bigcap_{t\in T}h(s,t) \enspace.\]

Consider an arbitrary site $u$ from $S\setminus(T\cup\{s\})$ and $\bm u\in\bm S$ such that $\val^*(\bm u)=-u$.
We show that
\[ \bigcap_{\bm t\in\bm S\cap(\val^*)^{-1}(-T)}\bm h(\bm s, \bm t) \ \subseteq \ \bm h(\bm s, \bm u) \enspace, \]
which will imply that $\PR(\bm s)$ is a polyhedral cone with hyperplane description given by
\[ \PR(\bm s) \ = \ \bigcap_{\bm t\in\bm S\cap(\val^*)^{-1}(-T)}\bm h(\bm s, \bm t) \enspace . \]

Let $K:=\smallSetOf{i\in [n]}{u_i>s_i}$ and $L:=\smallSetOf{j\in [n]}{u_j<s_j}$.
By the construction of $T$, there exists $t\in T\cap C_{K,L}$ such that $t_j>u_j$ for all $j\in L$.
Pick $\bm t\in\bm S$ such that $\val^*(\bm t)=-t$.

Let $\bm x\in\bm h(\bm s, \bm t)$ arbitrary.
This is a point satisfying $\sum_{i\in K}(\bm s_i-\bm t_i)\cdot\bm x_i\leq\sum_{j\in L}(\bm t_j-\bm s_j)\cdot\bm x_j$.
In the following expression we use the notation $x=\val^*(\bm x)$.
Since $u\in C_{K,L}$ we obtain the equalities
\begin{equation}\label{eq:val1}
\val^*\left(\sum_{i\in K}(\bm s_i-\bm u_i)\cdot\bm x_i\right) \ = \ \max_{i\in K}(-s_i+x_i)
\end{equation}
and
\begin{equation}\label{eq:val2}
\val^*\left(\sum_{j\in L}(\bm u_j-\bm s_j)\cdot\bm x_j\right) \ = \ \max_{j\in L}(-u_j+x_j) \enspace.
\end{equation}
But $\bm x\in\bm h(\bm s, \bm t)$ implies $\max_{i\in K}(-s_i+x_i)\leq\max_{j\in L}(-t_j+x_j)$.
The latter is strictly smaller than $\max_{j\in L}(-u_j+x_j)$ in view of our selection for $t$.
This entails $\max_{i\in K}(-s_i+x_i)<\max_{j\in L}(-u_j+x_j)$. Using the last inequality with (\ref{eq:val1}) and (\ref{eq:val2}), we obtain
\[\sum_{i\in K}(\bm s_i-\bm u_i)\cdot\bm x_i \ < \ \sum_{j\in L}(\bm u_j-\bm s_j)\cdot\bm x_j \enspace .\]

The choice of $\bm x$ was arbitrary, so $\bm h(\bm s, \bm t)\subseteq\bm h(\bm s, \bm u)$.
Consequently,
\[\bigcap_{\bm t'\in\bm S\cap(\val^*)^{-1}(-T)}\bm h(\bm s, \bm t') \ \subseteq \ \bm h(\bm s, \bm u) \enspace .\]
\end{proof}

In the following main result the assumption on general position allows arbitrary lifts for the sites.
This is similar to the relationship between tropical and Puiseux polyhedra; e.g., see \cite[Corollary 8.15]{ETC}.
\begin{theorem} \label{th:PuiseuxLift}
  Let $S\subset\cH$ be a nonempty super-discrete set of sites in general position.
  Further, let $\bm S\subset\bm\cH$ be a lifted point configuration such that $\val^*:\bm S\to -S$ is bijective.
  Then $\val^*$ maps each farthest power region onto the corresponding Voronoi region, and this induces a poset isomorphism from the farthest power diagram $\PD(\bm S)$ to the asymmetric tropical Voronoi diagram $\VD(S)$.
\end{theorem}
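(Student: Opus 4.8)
The plan is to work entirely inside $\bm\cH$ and its image $\cH=\val^*(\bm\cH)$, using that for $\bm x\in\bm\cH$ every coordinate $\bm x_i$ is a positive element of $\KK$ and that $x:=\val^*(\bm x)$ lies in $\cH$, since $\val^*$ is a valuation and $\val^*(\bm x_1\cdots\bm x_n)=\val^*(1)=0$. Two facts are used repeatedly. First, for nonnegative $\bm u,\bm v\in\KK$ we have $\val^*(\bm u+\bm v)=\max(\val^*\bm u,\val^*\bm v)$ and $\bm u\le\bm v\Rightarrow\val^*\bm u\le\val^*\bm v$; moreover, if $\val^*\bm u=\val^*\bm v$, then the sign of $\bm v-\bm u$ equals that of the difference of the leading coefficients. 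Second, for $x,s\in\cH$ one has $d_\triangle(x,s)=n\max_i(x_i-s_i)$, immediate from \eqref{eq:dtriangle} and $\sum_ix_i=\sum_is_i=0$, so that $h(s,t)=\smallSetOf{x\in\cH}{\max_i(x_i-s_i)\le\max_i(x_i-t_i)}$. The sign reversal built into $\val^*:\bm S\to -S$, together with the dual valuation, is exactly what makes the two sides match.

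\emph{The easy inclusion.} Let $\bm s,\bm t\in\bm\cH$, so $\val^*\bm s=-s$ and $\val^*\bm t=-t$. By Proposition~\ref{prop:power-region}, $\bm h(\bm s,\bm t)$ consists of the $\bm x$ with $\sum_i\bm s_i\bm x_i\le\sum_i\bm t_i\bm x_i$. For $\bm x\in\bm\cH$ all summands are positive, so the valuations of the two sides, which are $\max_i(-s_i+x_i)$ and $\max_i(-t_i+x_i)$, satisfy $\max_i(-s_i+x_i)\le\max_i(-t_i+x_i)$; that is, $x\in h(s,t)$. Intersecting over the sites and using Lemma~\ref{lemma:polyPR} to write each power region as a finite intersection of such halfspaces, we get $\val^*\bigl(\PR_{\bm S}(\bm a)\cap\bm\cH\bigr)\subseteq\VR_S(a)$, and more generally $\val^*$ sends each cell of $\PD(\bm S)$ into the matching cell of $\VD(S)$.

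\emph{Lifting one Voronoi region.} For the reverse inclusion, fix $x\in\VR_S(a)$; after a translation, assume $a=\0$, and put $A:=\smallSetOf{i\in[n]}{x_i=\max_jx_j}$. Define $\bm x$ by $\bm x_i:=\varepsilon\,t^{x_i}$ for $i\in A$ and $\bm x_i:=t^{x_i}$ for $i\notin A$, with $\varepsilon\in(0,1]$ still to be chosen; then $\val^*\bm x_i=x_i$ for every $i$. Fix any other site $b$. If $\max_i(x_i-b_i)>\max_ix_i$, then $\sum_i\bm b_i\bm x_i$ has strictly larger valuation than $\sum_i\bm a_i\bm x_i$ and hence exceeds it for every $\varepsilon$. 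If $\max_i(x_i-b_i)=\max_ix_i$, general position forces the set $\smallSetOf{i}{x_i-b_i=\max_jx_j}$ to be disjoint from $A$ — a common index would give $b_i=0$ — so the leading coefficient of $\sum_i\bm b_i\bm x_i$ is a fixed positive real while that of $\sum_i\bm a_i\bm x_i$ is $O(\varepsilon)$, and $\sum_i\bm a_i\bm x_i<\sum_i\bm b_i\bm x_i$ once $\varepsilon$ is small. By Lemma~\ref{lemma:polyPR} only finitely many sites $b$ are relevant, so a single $\varepsilon$ works and $\bm x\in\PR_{\bm S}(\bm a)$. Since $\prod_i\bm x_i=\varepsilon^{|A|}$, rescaling $\bm x$ by the positive real $\varepsilon^{-|A|/n}$ (of valuation $0$) moves it into $\bm\cH$ without leaving the cone $\PR_{\bm S}(\bm a)$ or changing $\val^*\bm x=x$. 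So $\val^*\bigl(\PR_{\bm S}(\bm a)\cap\bm\cH\bigr)=\VR_S(a)$; observe that this lift makes strict every inequality to a site that is not closest to $x$.

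\emph{Lifting a cell, and the poset isomorphism.} The crux is to lift a point $x$ of an intersection $\bigcap_{a\in A}\VR_S(a)$ to a point $\bm x$ of $\bigcap_{\bm a\in\bm A}\PR_{\bm S}(\bm a)$: belonging to all $\PR(\bm a)$, $a\in A$, forces the $\KK$-linear forms $\sum_i\bm a_i\bm x_i$ to coincide \emph{on the nose}, not only up to leading order, so a monomial lift as above is no longer enough. I would produce such an $\bm x$ by solving the finitely many $\KK$-linear equations $\sum_i(\bm a_i-\bm a'_i)\bm x_i=0$ for $a,a'\in A$. Here general position is decisive: the argmax sets $A_a(x)=\smallSetOf{i}{x_i-a_i=\max_j(x_j-a_j)}$, $a\in A$, are pairwise disjoint nonempty subsets of $[n]$, so $|A|\le n$ and there is room to solve. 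Choosing one pivot coordinate in each $A_a(x)$, setting the remaining coordinates to monomials $c_it^{x_i}$ with real parameters $c_i$, and eliminating, one solves the resulting system triangularly, at each step dividing by a coefficient whose valuation and sign are determined by general position and exploiting the freedom in the $c_i$ to keep the pivot coordinates positive and of the prescribed valuations; the $O(\varepsilon)$ device from the previous step, applied simultaneously on the disjoint sets $A_a(x)$, keeps the remaining inequalities strict. This is the step I expect to be the main obstacle. Granting it, $\val^*$ maps the cell $\bigcap_{\bm a\in\bm A}\PR_{\bm S}(\bm a)\cap\bm\cH$ of $\PD(\bm S)$ onto the cell $\bigcap_{a\in A}\VR_S(a)$ of $\VD(S)$, so one is empty precisely when the other is, and the bijection $\bm S\leftrightarrow -S$ carries the indexing data of $\PD(\bm S)$ onto that of $\VD(S)$. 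Finally, general position guarantees that a nonempty cell of either diagram is labelled by the inclusion-maximal family of regions containing it, and that $\PR_{\bm S}(\bm b)$ contains a given cell exactly when $\VR_S(b)$ contains the matching one — the two being governed by the same (ordinary, respectively tropical) affine-dependence relations among the finitely many relevant sites, kept in step by $\val^*$, in the manner familiar from tropicalization of polyhedra in general position (cf.\ \cite[Corollary~8.15]{ETC}). Hence the induced index-set bijection is an isomorphism of the inclusion-ordered cell posets, which is the asserted isomorphism $\PD(\bm S)\cong\VD(S)$.
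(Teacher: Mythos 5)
Your first two steps are sound. The \enquote{easy inclusion} coincides with the paper's argument (tropicalize the linear inequality of Proposition~\ref{prop:power-region} and use the finite exterior description from Lemma~\ref{lemma:polyPR}). Your $\varepsilon$-perturbed monomial lift of a point $x\in\VR_S(a)$ is also correct: the disjointness of the two argmax sets really does follow from general position, the leading coefficient of $\sum_i\bm a_i\bm x_i$ is a positive multiple of $\varepsilon$ while that of $\sum_i\bm b_i\bm x_i$ is a fixed positive real, and only the finitely many sites from Lemma~\ref{lemma:polyPR} need to be checked, so the region-level surjectivity $\val^*\bigl(\PR_{\bm S}(\bm a)\cap\bm\cH\bigr)=\VR_S(a)$ goes through. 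This is a genuinely different and more elementary route than the paper's, which instead verifies that the pair of matrices describing $\VR_S(a)$ is tropically sign generic (a short counting argument with the bijections $\mu,\nu$ and general position) and then invokes \cite[Theorem~8.12]{ETC} so that \emph{any} lift of that exterior description tropicalizes exactly.

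The genuine gap is in the second half of the statement, the poset isomorphism, and you flag it yourself. Your lift makes \emph{every} inequality strict, so it only produces points in the relative interiors of regions; to hit a cell $\bigcap_{a\in A}\VR_S(a)$ with $|A|\ge 2$ you must solve the exact $\KK$-linear equations $\sum_i(\bm a_i-\bm a'_i)\bm x_i=0$, and the triangular-elimination sketch is not a proof: nothing is shown about why the solved-for pivot coordinates are positive elements of $\KK$ with the prescribed valuations $x_i$ (so that $\val^*\bm x=x$ and $\bm x$ can be rescaled into $\bm\cH$), why the strict inequalities to the remaining finitely many sites survive the elimination, or why the resulting cell-to-cell map is injective and order-reflecting, all of which are needed for an isomorphism $\PD(\bm S)\cong\VD(S)$. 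This is precisely the content that the paper imports wholesale from the sign-genericity machinery: once the pair $(A^-,A^+)$ is tropically sign generic, \cite[Theorem~8.12]{ETC} (together with \textsection 8.3 and Corollary~8.15 there) guarantees that arbitrary lifts tropicalize cell by cell, giving both surjectivity on each cell and the face correspondence. Your closing appeal to \enquote{the same affine-dependence relations, kept in step by $\val^*$} is exactly the statement to be proved, not an argument for it; as written, the proposal establishes the first assertion of Theorem~\ref{th:PuiseuxLift} but leaves the poset isomorphism unproven.
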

\begin{proof}
  Pick a lifted point configuration $\bm S$ on the hypersurface $\bm\cH$, i.e., $\val^*(\bm S)=-S$; recall that $-\cH=\cH$.
  For $\bm a, \bm b \in \bm S$, by Proposition~\ref{prop:power-region}, the power region $\bm h(\bm a,\bm b)$ is determined by the linear inequality
  \begin{equation}\label{eq:PR-pair:ineq}
    \sum_{i\in[n]}\bm a_i \bm x_i \ \leq \ \sum_{i\in[n]}\bm b_i \bm x_i \enspace . 
  \end{equation}
  As $\bm a,\bm b \in \bm \cH$, the tropicalization of \eqref{eq:PR-pair:ineq} reads
  \begin{equation}
    \bigoplus_{i\in[n]}(-a_i + x_i) \ \leq \ \bigoplus_{i\in[n]} (-b_i + x_i) \enspace ,
  \end{equation}
  which is the defining inequality for $h(a,b)$.
  This yields $\val^*(\bm h(\bm a,\bm b))=h(a,b)$.
	
  The proof of Lemma~\ref{lemma:polyPR} implies that there is a finite subset $\bm S_a\subseteq\bm S\setminus\{\bm a\}$ such that $\PR(\bm a)=\bigcap_{\bm b\in\bm S_a}\bm h^w(\bm a,\bm b)$ and $\VR(a)=\bigcap_{b\in S_a} h(a,b)$, where $-S_a=\val^*(\bm S_a)$.
	
  Moreover, we have:
  \[
    \val^*(\PR(\bm a)) \ \subseteq \ \bigcap_{\bm b\in\bm S_a}\val^*(\bm h(\bm a,\bm b)) \ = \ \bigcap_{b\in S_a}h(a,b) \ = \ \VR(a) \enspace .
  \]

  To obtain equality, it suffices to prove that the pair of matrices defining $\VR(a)$ is tropically sign generic \cite[Theorem~8.12]{ETC}.
  For two distinct sites $a$ and $b$ we consider $I(a,b):=\smallSetOf{k\in[n]}{a_k<b_k}$.
  General position then implies $I(b,a)=[n]\setminus I(a,b)$.
	
  The pair of matrices $(A^-,A^+)\in\TT^{S_a\times[n]}$ which describes $\VR(a)$ as a tropical polyhedron has the entries
  \begin{align*}
    A^{-}_{b,k} \ &= \ 
                    \begin{cases}
                      -a_k & \text{if }k\in I(a,b) \\
                      -\infty & \text{if }k\in I(b,a)
                    \end{cases} \\
    \intertext{and}
    A^{+}_{b,k} \ &= \
                    \begin{cases}
                      -\infty & \text{if }k\in I(a,b) \\
                      -b_k & \text{if }k\in I(b,a) \enspace .
                    \end{cases} 
  \end{align*}
  The matrix $A:=A^-\oplus A^+$ has finite entries due to general position.
  Suppose that the pair $(A^-,A^+)$ is tropically sign singular.
  Then there exists a nonempty set $B\subseteq S_a$ and another set $K\subseteq [n]$ such that $|B|=|K|$ and $\tdet A^-_{B,K}=\tdet A^+_{B,K}=\tdet A_{B,K}$.
  This is equivalent to the existence of bijections $\mu,\nu:B\rightarrow K$ with $\mu(b)\in I(a,b)$ and $\nu(b)\in I(b,a)$, for $b\in J$, as well as
  \begin{equation}\label{eq:st}
    -\sum_{b\in B}a_{\mu(b)} \ = \ \tdet A \ = \ -\sum_{b\in B}b_{\nu(b)} \enspace .
  \end{equation}
  However, we have $\sum_{b\in B}b_{\nu(b)}<\sum_{b\in B}a_{\nu(b)}=\sum_{k\in K}a_{k}=\sum_{b\in B}a_{\mu(b)}$, where the inequality comes from $\nu(j)\in I(b,a)$, and the two equalities use that $\mu$ and $\nu$ are bijections.
  We arrive at a contradiction, which refutes \eqref{eq:st}.
  Hence, $(A^-,A^+)$ is tropically sign generic.
\end{proof}

\begin{remark}
  Observe that the tropicalization $\val^*(\bm S)=-S$ to the negative is natural here as we are mapping points with Puiseux coordinates to (apices of) tropical halfspaces.
\end{remark}

\begin{remark}
  Instead of farthest power diagrams and lower convex hulls (for defining regular subdivisions), we could use ordinary power diagrams and upper convex hulls.
  This goes hand in hand with exchanging the arguments in the asymmetric tropical distance function.
\end{remark}

\section{A different view}
\label{sec:general}
\noindent
Edelsbrunner and Seidel~\cite{Edelsbrunner+Seidel:1986} studied Voronoi diagrams for general metrics.
In general, their construction is geometrically different from the approach of Amini and Manjunath \cite{RRlattice}, which we adopt here.
Yet for a discrete set $S$ in general position the two concepts agree, and this is what we will show now.

Following \cite[\textsection 3]{Edelsbrunner+Seidel:1986} we define a function $D_S:\cH\to S$ by letting
\begin{equation}\label{eq:voronoi-cell}
  D_S(x) \ := \ \bigSetOf{ a\in S }{ d_\triangle(x,a) = \min_{b\in S}d_\triangle(x,b) } \enspace.
\end{equation}
That function defines an equivalence relation $\equiv_S$ on $\cH$ via: $x\equiv_S y$ if and only if $D_S(x)=D_S(y)$.
The equivalence classes of $\equiv_S$ partition $\cH$, and they are called \emph{V-cells} with respect to $S$.
For $T\subseteq S$ we set
\[
  V_T \ := \ \bigSetOf{ x\in\cH }{ D_S(x)=T } \enspace ,
\]
and this is a V-cell or empty.
A V-cell $V_T$ such that $T=\{a\}$ is a singleton is the \emph{V-region} of the site $a$.
Clearly, if $S$ is finite, then there are only finitely many V-cells.

\begin{remark}\label{rmk:cardTgenpos}
	If a V-cell $V_T$ is nonempty and $S$ is in general position, then $|T|\leq n$.
	Indeed, if $|T|>n$ and there existed a point $x\in V_T$, then the pigeonhole principle would imply the existence of an index $i\in[n]$ and of at least two distinct sites $s,t\in T$ such that $d_\triangle(x,s)=n(x_i-s_i)$ and $d_\triangle(x,t)=n(x_i-t_i)$.
	Since $x\in V_T$, we must have $n(x_i-s_i)=d_\triangle(x,s)=d_\triangle(x,t)=n(x_i-t_i)$ which entails $s_i=t_i$.
	This cannot happen, as we assumed $S$ to be in general position.
	
	Similarly, any Voronoi cell $\bigcap_{a\in T}\VR_S(a)$ is empty when $|T|>n$ and $S$ is in general position.
\end{remark}

\begin{lemma}\label{lem:region-generic}
  Let $S\subset\cH$ be a discrete set of sites.
  Then the topological closure of the V-region $V_{\{a\}}$ is contained in the Voronoi region $\VR_S(a)$.
  If $S$ is in general position, then the closure of $V_{\{a\}}$ equals $\VR_S(a)$.
\end{lemma}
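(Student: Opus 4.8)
The plan is to prove the two inclusions separately: the first holds for any discrete $S$, while the second needs the general position hypothesis and is where the work lies.

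For $\overline{V_{\{a\}}}\subseteq\VR_S(a)$ I would argue directly from the definitions. If $x\in V_{\{a\}}$ then $d_\triangle(x,a)=\min_{b\in S}d_\triangle(x,b)$, so $d_\triangle(x,a)\le d_\triangle(x,b)$ for every $b\in S$, which is precisely $x\in\VR_S(a)$; hence $V_{\{a\}}\subseteq\VR_S(a)$. Since $\VR_S(a)=\bigcap_{b\in S\setminus\{a\}}h(a,b)$ is an intersection of tropical halfspaces (Proposition~\ref{prop:VR-pair}), each of which is closed, the region $\VR_S(a)$ is closed, and passing to closures gives the claim.

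For the reverse inclusion under general position, I would reuse the contraction argument from the proof of Lemma~\ref{lem:puredim}. Fix $x\in\VR_S(a)$ and put $x^{(t)}:=(1-t)a+tx$ for $t\in[0,1)$; this lies in $\cH$ because $\cH$ is a linear subspace containing $a$ and $x$. The displayed chain of inequalities in the proof of Lemma~\ref{lem:puredim} shows, for every $b\in S\setminus\{a\}$, the strict inequality $\max_{i\in I(a,b)}(x^{(t)}_i-a_i)<\max_{j\in I(b,a)}(x^{(t)}_j-b_j)$, where $I(a,b):=\smallSetOf{i\in[n]}{a_i<b_i}$ and general position forces $I(b,a)=[n]\setminus I(a,b)$. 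I then need the strict analogue of the equivalence appearing in the proof of Proposition~\ref{prop:VR-pair}: this reduced inequality is equivalent to $\max_{i\in[n]}(x^{(t)}_i-a_i)<\max_{i\in[n]}(x^{(t)}_i-b_i)$, i.e.\ to $d_\triangle(x^{(t)},a)<d_\triangle(x^{(t)},b)$. One direction holds because an index attaining $\max_i(-b_i+x^{(t)}_i)$ cannot lie in $I(a,b)$ (there $-b_i<-a_i$), and the other because on $I(b,a)$ one has $-a_i<-b_i$; in both directions strictness survives.

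Having $d_\triangle(x^{(t)},a)<d_\triangle(x^{(t)},b)$ for all $b\in S\setminus\{a\}$, and since discreteness of $S$ guarantees that $\min_{b\in S}d_\triangle(x^{(t)},b)$ is attained, the site $a$ is the unique minimizer, so $D_S(x^{(t)})=\{a\}$ and $x^{(t)}\in V_{\{a\}}$. Letting $t\to 1$ yields $x=\lim_{t\to 1}x^{(t)}\in\overline{V_{\{a\}}}$, which finishes the proof. I expect the only delicate point to be the bookkeeping in the penultimate paragraph — checking that every step of the Proposition~\ref{prop:VR-pair} argument stays strict once a strict hypothesis is fed in and general position is invoked — but this is routine rather than conceptually hard.
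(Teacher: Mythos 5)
Your proof is correct and takes essentially the same route as the paper: the converse inclusion rests on the contraction $x^{(t)}=a+t(x-a)$ from the proof of Lemma~\ref{lem:puredim} together with the strictness of the two-site inequalities granted by general position. The only difference is packaging — the paper passes through the interior of $\VR_S(a)$ (interior points lie in $V_{\{a\}}$ by Lemma~\ref{lem:bisector-generic}, and $\VR_S(a)$ is the closure of its interior by Lemma~\ref{lem:puredim}), whereas you inline the segment computation and verify directly that the strict reduced inequality implies $d_\triangle(x^{(t)},a)<d_\triangle(x^{(t)},b)$, which is a sound and equally valid reorganization.
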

\begin{proof}
  Let $x\in V_{\{a\}}$, i.e., $D_S(x)=\{a\}$.
  Then $d_\triangle(x,a) \leq d_\triangle(x,b)$ for all $b\in S$, whence $x\in\VR_S(a)$.
  That is, $V_{\{a\}}\subseteq\VR_S(a)$.
  Since $\VR_S(a)$ is closed it also contains the closure of $V_{\{a\}}$.

  For the converse we assume that $S$ is in general position, and we pick a point $x$ in the interior of the Voronoi region $\VR_S(a)$.
  Then it is a consequence of Lemma~\ref{lem:bisector-generic} that $d_\triangle(x,a)<d_\triangle(x,b)$ for all sites $b\neq a$.
  That is, $D_S(x)=\{a\}$ or, equivalently, $x\in V_{\{a\}}$.
  The conclusion now follows from Lemma~\ref{lem:puredim}.
\end{proof}

For the sake of conciseness we call the topological closure of a V-cell a \emph{closed V-cell}.
Now we can prove that our tropical Voronoi diagrams agree with the construction in \cite{Edelsbrunner+Seidel:1986}, provided that the sites are in general position.
This is based on the crucial fact that tropical hyperplane arrangements in general position essentially behave like ordinary hyperplanes in general position; cf.\ \cite[\textsection\textsection 7.5, 8.3]{ETC}.
Observe that tropical polyhedra are ordinary polyhedral complexes, which thus have a dimension.
This dimension agrees with the notion of \enquote{tropical rank} \cite[\textsection 5.3]{Tropical+Book}.

\begin{theorem}\label{thm:V-cells}
  Let $S\subset\cH$ be a discrete set of sites in general position.
  Then the Voronoi cells in $\VD(S)$ are precisely the closed V-cells with respect to $S$.
  Moreover, the V-cell $V_T$ (and its closure) is of dimension $n-|T|$.
\end{theorem}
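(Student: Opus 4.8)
The plan is to build a dictionary between the two descriptions from the elementary identity
\[
  \textstyle\bigcap_{a\in T}\VR_S(a) \ = \ \bigcup_{T'\supseteq T} V_{T'} \qquad (\emptyset\neq T\subseteq S)\enspace,
\]
which just unwinds the definitions: $x$ lies in $\VR_S(a)$ for every $a\in T$ precisely when $T\subseteq D_S(x)$. Write $C(T)$ for the left-hand side; it is closed, being an intersection of the closed sets $\VR_S(a)$, and the union on the right is disjoint. I would first record the local remark that $V_T$ is relatively open in $C(T)$: if $y$ is close to a point $x\in V_T$ then $D_S(y)\subseteq D_S(x)=T$, because only finitely many sites lie near $x$ (discreteness) and for each site $b\notin T$ the strict inequality $d_\triangle(x,b)>d_\triangle(x,a)$, $a\in T$, persists in a neighbourhood, while $D_S(y)\supseteq T$ holds throughout $C(T)$. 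The whole theorem follows once I prove that for every nonempty $T\subseteq S$,
\[
  \overline{V_T}\ =\ C(T)\qquad\text{and}\qquad \dim V_T = n-|T| \text{ whenever } V_T\neq\emptyset\enspace.
\]
Indeed, this makes the nonempty sets $C(T)$ coincide with the closed V-cells: a nonempty $C(T)$ equals $\overline{V_T}$, forcing $V_T\neq\emptyset$; conversely the closure of a nonempty V-cell $V_T$ is the Voronoi cell $C(T)$.

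The inclusion $\overline{V_T}\subseteq C(T)$ is immediate, so the content is the reverse inclusion together with the dimension count, and both are local. Fix $x\in C(T)$, set $T':=D_S(x)$, so that $T\subseteq T'$ and $x\in V_{T'}$; by Remark~\ref{rmk:cardTgenpos} we have $|T'|\le n$, hence also $|T|\le n$. By the remark above there is an open $U\ni x$ on which $D_S(y)=\argmin_{a\in T'}d_\triangle(y,a)$, and likewise $C(T'')\cap U=\bigcap_{a\in T'',\,b\in T'}h(a,b)\cap U$ for every $T''\subseteq T'$; recall from the proof of Proposition~\ref{prop:VR-pair} that for $a,b\in\cH$ the comparison $d_\triangle(y,a)\le d_\triangle(y,b)$ is just $\max_i(y_i-a_i)\le\max_i(y_i-b_i)$. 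Thus, inside $U$, both decompositions are governed entirely by the finite arrangement of the tropical halfspaces $h(a,b)$, $a,b\in T'$, all of whose boundary bisectors $\bisector(a,b)$ pass through $x$. General position of $S$ makes this arrangement generic: it gives $I(b,a)=[n]\setminus I(a,b)$, and the pair of matrices with entries $-a_k$ and $-b_k$ attached to the splitting $[n]=I(a,b)\sqcup I(b,a)$ is tropically sign generic by exactly the computation carried out in the proof of Theorem~\ref{th:PuiseuxLift}. Invoking the principle that tropical hyperplane arrangements in general position behave combinatorially like ordinary central hyperplane arrangements in general position (cf.\ \cite[\textsection\textsection 7.5, 8.3]{ETC}), together with the identification of the dimension of a tropical polyhedron with its tropical rank, I may replace the local picture at $x$ by a generic ordinary central arrangement of the $\binom{|T'|}{2}$ hyperplanes $\bisector(a,b)$.

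In that ordinary model the remaining assertions are classical. Pick $a_1\in T$. By genericity the $|T|-1$ hyperplanes $\bisector(a_1,a)$, $a\in T\setminus\{a_1\}$, have independent normals, so the flat $F:=\bigcap_{a,b\in T}\bisector(a,b)$ has dimension $n-|T|$, and $\{D_S=T\}\cap U\subseteq F$ since all of $T$ must be tied there. For each $b\in T'\setminus T$, genericity gives $F\not\subseteq\bisector(a_1,b)$, so on exactly one of the two open half-flats of $F$ abutting $x$ the site $a_1$ --- hence all of $T$ --- strictly beats $b$. Since $|T'\setminus T|=|T'|-|T|\le n-|T|=\dim F$, these at most $\dim F$ open half-flats cannot positively span $F$, so their intersection is a nonempty relatively open cone in $F$ with apex $x$, on which $D_S\equiv T$ exactly. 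Hence $\{D_S=T\}$ meets $U$ in a nonempty $(n-|T|)$-dimensional set with $x$ in its closure; in particular $x\in\overline{V_T}$, which yields $C(T)\subseteq\overline{V_T}$. Running the same argument with $T'=T$ (that is, at an arbitrary $x\in V_T$) shows that $V_T$ has local dimension $n-|T|$ everywhere, so $\dim V_T=n-|T|$; and if $|T|>n$ there is no $x\in C(T)$ at all, so $C(T)=\emptyset=\overline{V_T}$ trivially while $V_T=\emptyset$ by Remark~\ref{rmk:cardTgenpos}. The step I expect to be the main obstacle is precisely this genericity reduction: one must be certain that general position of the sites really produces a tropically sign generic --- and hence, locally, an honestly generic --- arrangement of the bisectors $\bisector(a,b)$, so that the comfortable linear reasoning about $F$ and its half-flats is legitimate and all the flats have their expected dimensions. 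This is where the sign-genericity computation from the proof of Theorem~\ref{th:PuiseuxLift} and the structural results of \cite[\textsection\textsection 7.5, 8.3]{ETC} are indispensable; everything else is bookkeeping with the two families of cells.
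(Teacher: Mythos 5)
Your proposal is correct and follows essentially the same route as the paper: both reduce a Voronoi cell to the intersection of the pairwise bisectors of the tied sites and then invoke general position together with the Puiseux-lifting/sign-genericity principle (the computation from Theorem~\ref{th:PuiseuxLift} and \cite[\textsection\textsection 7.5, 8.3]{ETC}) to treat that bisector arrangement as a generic ordinary hyperplane arrangement, concluding that the cell is a complete intersection equal to the closure of $V_T$ and of dimension $n-|T|$. The only difference is presentational: you spell out the local step (relative openness of $V_T$ in $\bigcap_{a\in T}\VR_S(a)$ and the Gordan-type cone argument at a point of $V_{T'}$) that the paper compresses into the phrase \enquote{complete intersection}, and you treat singletons uniformly where the paper defers to Lemmas~\ref{lem:puredim} and~\ref{lem:region-generic}.
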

\begin{proof}
	
  Let $V_T$ be a V-cell.
  From its definition, it is clear that the closure of $V_T$ is a subset of $W_T:=\bigcap_{a\in T}\VR_S(a)$.
  In particular, if $W_T$ is empty, then also $V_T$ is empty.
	
  Now consider a nonempty set $T\subseteq S$ such that $W_T$ is a Voronoi cell, i.e., it is not empty.
  We need to show that $W_T$ is the closure of the V-cell $V_T$.
  The case when $T$ is a singleton is covered by Lemma~\ref{lem:region-generic}, so we will assume that $|T|\geq 2$ from now on.
  Note that $|T|\leq n$ because $S$ is in general position; see Remark~\ref{rmk:cardTgenpos}.
  Since $T$ contains at least two sites, we can pick any $a\in T$ and write $W_T$ as the intersection $\bigcap_{b\in T\setminus\{a\}}\bisector(a,b)$.
  Now the general position comes in twice.
  First, by Lemma~\ref{lem:bisector-generic} the bisectors are boundary planes of tropical halfspaces.
  Second, the tropical halfspace arrangement induced by those boundary planes is generic.
  Hence, via lifting to Puiseux series that tropical halfspace arrangement has the same intersection poset as an ordinary hyperplane arrangement over Puiseux series; see \cite[\textsection 8.3]{ETC}.
  That is, the set $W_T$ is a complete intersection, and thus $W_T$ is the closure of $V_T$.
  This also proves that $W_T$ is an ordinary polyhedral complex of pure dimension $n-|T|$.
\end{proof}

The proof above gives a somewhat high-level view on the situation.
The key idea is to employ lifts to Puiseux series, much like in Section~\ref{sec:power} above.

Boissonnat et al.\ \cite[\textsection 8.2]{BSTY1998} developed an incremental algorithm for computing Voronoi diagrams in the sense of \cite{Edelsbrunner+Seidel:1986} for simplicial distance functions.
By \cite[Theorem~5.1]{BSTY1998} there are at most $\Theta(m^{\lceil (n-1)/2\rceil})$ many V-cells, for $n$ fixed.
In  \cite[Theorem~8.8]{BSTY1998} the authors show that the tropical Voronoi diagram of $m$ sites in $\torus{n}$ in general position can be constructed incrementally in randomized expected time $O(m \log m + m^{\lceil (n-1)/2\rceil})$.
This agrees with the complexity to compute Euclidean Voronoi diagrams \cite[Corollary~17.2.6]{BoissonnatYvinec:1998}.
However, it is faster than the algorithm in \cite[Theorem~10]{TropBis} for computing tropical Voronoi diagrams with respect to the symmetric tropical distance; that expected time complexity bound is $O(m^{n-1}\log m)$.
Note that the algorithms of \cite{BoissonnatYvinec:1998} and \cite{TropBis} produce different types of output.

\section{Delone complexes and Laurent monomial modules}
\label{sec:delaunay}
\noindent
For super-discrete sites in general position, by Theorem \ref{th:PuiseuxLift}, the combinatorial type of the asymmetric tropical Voronoi diagrams is preserved in the lift to generalized dual Puiseux series $\KK=\hahnseries{\RR}{t}^*$.
Recall that this combinatorial type is defined as the intersection poset of the Voronoi regions.
Here we show how such posets occur in commutative algebra.
To this end we consider the \emph{monomial lifting function}
\begin{equation}
  \mu : \cH\to\bm\cH \,,\ s\mapsto t^{-s} \enspace,
\end{equation}
where $t^{-s}=(t^{-s_1},\dots,t^{-s_n})$.
First we assume that $S$ is a finite set of sites and $\bm S=\mu(S)\subset\bm\cH$ is its \emph{monomial lift}.
Then $\PD(\bm S)$ gives rise to a family of power diagrams $\PD(\bm S(t))$ over the reals which depend on a real parameter $t$.
The cells of a power diagram, over $\KK$ or $\RR$, are partially ordered by inclusion.

\begin{lemma}\label{lem:PDS-vs-PDSt}
  Let $S\subset\cH$ be a finite set of sites in general position with monomial lift $\bm S=\mu(S)$.
  Then the Puiseux farthest power diagram $\PD(\bm S)$ is isomorphic to the boundary complex of an ordinary polyhedron over the ordered field\/ $\KK$ of real Puiseux series.
  Moreover, for any $t$ large enough, $\PD(\bm S)$ and $\PD(\bm S(t))$ are isomorphic as posets.
\end{lemma}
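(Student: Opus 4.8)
The plan is to combine the classical ``lift to a polyhedron'' description of power diagrams with the standard specialisation argument for polyhedra defined over Puiseux series. For the first assertion, recall from the proof of Proposition~\ref{prop:power-region} that, for the weight $w=\|\cdot\|^2$, the halfspace $\bm h(\bm a,\bm b)$ is $\{\bm x:\sum_i\bm a_i\bm x_i\le\sum_i\bm b_i\bm x_i\}$, so the sites lift to the \emph{linear} hyperplanes $x_{n+1}=2\sum_i\bm a_i\bm x_i$ in $\KK^{n+1}$. The familiar lifting correspondence for (farthest) power diagrams --- see \cite[\textsection7.2]{AurenhammerKleinLee:2013} and \cite{PowDiag} --- is a purely linear-algebraic and order-theoretic construction and hence valid over the ordered field $\KK$. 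It identifies $\PD(\bm S)$ with the boundary complex of the polyhedron
\[
  \bm R(\bm S)\ :=\ \SetOf{(\bm x,x_{n+1})\in\KK_{\ge 0}^n\times\KK}{x_{n+1}\le 2\sum_{i\in[n]}\bm a_i\bm x_i\ \text{for all}\ \bm a\in\bm S}
\]
(the orthant constraint in the definition of $\PR^w_{\bm S}$ merely contributes the extra ``wall'' facets $\bm x_i=0$; here $\bm R(\bm S)$ is in fact a cone, since the lifted hyperplanes pass through the origin). General position of $S$ makes the arrangement of lifted hyperplanes generic, so $\bm R(\bm S)$ is a nondegenerate polyhedron; this is the same mechanism used in Theorem~\ref{thm:V-cells}.

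For the second assertion I would argue as follows. Since $\bm S=\mu(S)$ has entries $\bm a_i=t^{-s_i}$ that are single monomials in $t$, and since $\|\bm a\|^2=\sum_i t^{-2s_i}$ commutes with substituting a positive real number $t=\tau$, the real polyhedron obtained from $\bm R(\bm S)$ by this substitution is exactly $\bm R(\bm S(\tau))$, whose boundary complex realises the ordinary power diagram $\PD(\bm S(\tau))$ by the first part applied over $\RR$. So it remains to show that $\bm R(\bm S(\tau))$ and $\bm R(\bm S)$ are combinatorially isomorphic for all $\tau$ large enough. The combinatorial type of a polyhedron given by a fixed, indexed list of linear inequalities is a function of the signs of a finite family $\mathcal D$ of subdeterminants of the homogenised coefficient matrix --- this is the standard input for the theory of Puiseux polyhedra; cf.\ \cite[\textsection8.3]{ETC} and \cite[Corollary~8.15]{ETC}. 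Each $d\in\mathcal D$, evaluated on our coefficient matrix, is a finite $\RR$-linear combination $\sum_k c_kt^{r_k}$ of powers of $t$; by the definition of $\val^*$ and of the order on $\KK$, its sign in $\KK$ equals the sign of the coefficient $c_0$ belonging to the \emph{highest} exponent, and that is also the sign of $d(\tau)$ for all sufficiently large $\tau$ (or $d\equiv 0$, in which case $d(\tau)=0$ for every $\tau$). Choosing $\tau_0$ beyond the finitely many thresholds coming from $\mathcal D$, the sign vectors of $\mathcal D$ over $\RR$ (at $\tau>\tau_0$) and over $\KK$ coincide; hence $\bm R(\bm S(\tau))\cong\bm R(\bm S)$ as polyhedra, and therefore $\PD(\bm S(\tau))\cong\PD(\bm S)$ as posets.

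The only genuine obstacle is the clean formulation and use of the statement ``combinatorial type is governed by the sign data $\mathcal D$, uniformly under $t\mapsto\tau$''; this is precisely the Puiseux-specialisation principle of \cite[\textsection8]{ETC}, instantiated for the explicit coefficient matrix arising from the monomial lift $\mu(S)$, so I expect it to be a matter of invoking that machinery in the right form rather than of a new idea. The remaining bookkeeping --- the lifting correspondence itself, the bookkeeping of the orthant walls $\bm x_i=0$, and the compatibility of the weight $\|\cdot\|^2$ with the substitution $t=\tau$ --- is routine.
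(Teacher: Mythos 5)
Your proposal is correct and takes essentially the same route as the paper: the paper's proof simply cites the lifting correspondence for power diagrams, valid over any ordered field (\cite[\textsection 4]{PowDiag}, \cite[Appendix~A]{ETC}), for the first claim, and the Puiseux-specialisation principle (\cite[\textsection 8.5]{ETC}, with the threshold $t_0$ from \cite[Theorem~12]{ABGJ:2018}) for the second, which is precisely the machinery you reconstruct via the lifted cone and the signs of subdeterminants of the monomial coefficient matrix. The only difference is that you spell out what the paper delegates to these references; there is no gap.
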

\begin{proof}
  The first claim is a consequence of \cite[\textsection 4]{PowDiag}, which discusses power diagrams over the reals.
  For a discussion of polyhedral geometry over arbitrary ordered fields, see \cite[Appendix~A]{ETC}.
  The second claim follows from \cite[\textsection 8.5]{ETC}.
  It is explained in \cite[Theorem~12]{ABGJ:2018} how to find a real number $t_0>1$ such that $\PD(\bm S)$ and $\PD(\bm S(t))$ are isomorphic as posets for all $t>t_0$.
\end{proof}
Lemma~\ref{lem:PDS-vs-PDSt} holds for more general lifting functions.
Yet, by restricting to the specific function $\mu$ we avoid questions concerning convergence of Puiseux series.
Moreover, properties of the function $\mu$ enter the quantitative analysis in \cite[Theorem~12]{ABGJ:2018}.

Now we pass to the case, where $S$ is both super-discrete and an $(r,R)$-system, with $0<r<R\leq\infty$, but not necessarily in general position.
Recall that this includes the situation in which $S$ is an arbitrary finite set of sites.
Then we can dualize the asymmetric tropical Voronoi diagrams as follows.
\begin{definition}\label{def:delaunay}
  The \emph{(asymmetric tropical) Delone complex} $\Del(S)$ is defined as the clique complex of the dual graph of the Voronoi diagram $\VD(S)$.
\end{definition}
The nodes of the dual graph of $\VD(S)$ are the Voronoi regions, and they are adjacent if their intersection has codimension at most one.
A \emph{clique} in a graph is a subset of the nodes such that any two nodes in the set are connected by an edge.
The cliques form an abstract simplicial complex, which is called the \emph{clique complex}.
Simplicial complexes which arise in this way are also called \enquote{flag simplicial complexes}.
Delone complexes do not need to be pure. 

Again we let $\bm S=\mu(S)$ be the monomial lift.
Now we additionally assume that $S$ is in general position, while we also keep our previous assumption that $S$ is a super-discrete $(r,R)$-system.
By Theorem \ref{th:polySuperDisc} each Voronoi region $\VR(s)$ is a $\max$-tropical polyhedron, and by Theorem~\ref{th:PuiseuxLift} the monomial lift of $\VR(s)$ is an ordinary polyhedron over $\KK$.
We conclude that $\PD(\bm S)$ is a polyhedral complex over $\KK$, which may be infinite.
Similarly, $\PD(\bm S(t))$ is a polyhedral complex over the reals.
Thanks to $S$ being an $(r,R)$-system there is a uniform bound for $t$ relative to every cell.
Note that the upper bound $R$ is not required in the argument, whence that uniform bound exists for $(r,\infty)$-systems, too.
It follows that Lemma~\ref{lem:PDS-vs-PDSt} is valid for super-discrete $(r,R)$-systems in general position.

\begin{lemma}\label{lem:delaunay}
  Let $S\subset\cH$ be a super-discrete $(r,R)$-system in general position, for $0<r<R\leq\infty$.
  Then the Delone complex $\Del(S)$ is dual to $\VD(S)$ as a partially ordered set.
\end{lemma}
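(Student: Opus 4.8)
The plan is to show that the dual graph of $\VD(S)$, together with its clique complex, precisely records the face poset of the polyhedral complex $\PD(\bm S)$ turned upside down. The key observation is that, under the hypotheses, the farthest power diagram $\PD(\bm S)$ is a genuine polyhedral complex over $\KK$ (this was noted just before the statement, combining Theorem~\ref{th:polySuperDisc} and Theorem~\ref{th:PuiseuxLift}), and by Lemma~\ref{lem:PDS-vs-PDSt} — which we extended to super-discrete $(r,R)$-systems in general position — it is isomorphic to the boundary complex of an ordinary polyhedron over $\KK$. For such a complex, the standard fact from polyhedral combinatorics is that the face poset is fully reconstructible from the \emph{adjacency structure of the facets}: a set of facets has nonempty common intersection, and that intersection is itself a face, precisely when the facets are pairwise adjacent (share a ridge, i.e.\ a codimension-one face). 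This is exactly the reason power diagrams (and, classically, Delone complexes) have well-behaved nerves; it is where general position is used, since it guarantees that the arrangement of bisectors behaves like an ordinary hyperplane arrangement in general position, as established in Theorem~\ref{thm:V-cells} via the Puiseux lift.

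First I would transport the problem to $\PD(\bm S)$ via Theorem~\ref{th:PuiseuxLift}: $\val^*$ induces a poset isomorphism $\PD(\bm S)\cong\VD(S)$, so it suffices to prove that the clique complex of the dual graph of $\PD(\bm S)$ is the opposite poset of $\PD(\bm S)$. Second, I would invoke Lemma~\ref{lem:PDS-vs-PDSt} (in its extended form) to realize $\PD(\bm S)$ as the face complex of an ordinary polyhedron $P$ over $\KK$. Third — the combinatorial heart — I would show that for faces of $P$ in general position the nerve of the facets equals the face poset: given facets $F_1,\dots,F_k$, their intersection $F_1\cap\cdots\cap F_k$ is a face of codimension exactly $k$ whenever it is nonempty, and nonemptiness of the total intersection is equivalent to pairwise nonemptiness of $F_i\cap F_j$ in codimension one. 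This is precisely the complete-intersection phenomenon already exploited in the proof of Theorem~\ref{thm:V-cells}; I would either cite that or re-run the same Puiseux-lift-to-generic-hyperplane-arrangement argument. Finally, translating back: Voronoi regions are the facets (the nodes of the dual graph), pairwise codimension-one intersection is adjacency, and a clique $T$ corresponds to the Voronoi cell $\bigcap_{a\in T}\VR_S(a)$, with containment of cliques matching reverse inclusion of cells. Hence $\Del(S)$, the clique complex, is order-dual to $\VD(S)$.

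The main obstacle is bookkeeping the general-position hypothesis carefully enough to guarantee the complete-intersection property \emph{for possibly infinite} $S$: the nerve-equals-face-poset statement is standard for a single polytope, but here $P$ may be an unbounded or infinite polyhedral complex over $\KK$. The $(r,R)$-system assumption is what rescues this — it forces each Voronoi region to be locally finite (each cell meets only finitely many others, by the same discreteness-plus-ball argument used for tropical quasi-polyhedra and in Lemma~\ref{lem:compact}), so the nerve computation localizes to finitely many facets around each cell, and there the finite-polytope argument of Theorem~\ref{thm:V-cells} applies verbatim. A secondary subtlety worth a sentence: one must check that adjacency is symmetric and that ``intersection of codimension at most one'' really picks out the ridges of $\PD(\bm S)$ rather than spurious lower-dimensional contacts — again a consequence of general position, since by Theorem~\ref{thm:V-cells} the cell $\bigcap_{a\in T}\VR_S(a)$ has dimension exactly $n-|T|$, so codimension-one intersections are exactly those with $|T|=2$, i.e.\ the edges of the dual graph.
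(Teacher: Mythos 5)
Your argument hinges on the claimed ``standard fact'' that for an ordinary polyhedron the face poset is recovered from pairwise adjacency of facets, i.e.\ that facets which pairwise share ridges automatically have a nonempty common face of the expected codimension. That is not a fact: already for a triangular prism the three quadrilateral facets are pairwise adjacent but have empty intersection, and for a simplex all facets are pairwise adjacent while their total intersection is empty. Nor does your fallback rescue it: the complete-intersection argument in the proof of Theorem~\ref{thm:V-cells} only shows that a cell $W_T=\bigcap_{a\in T}\VR_S(a)$, \emph{if nonempty}, has dimension $n-|T|$; it gives no mechanism for deducing nonemptiness of $W_T$ from nonemptiness of the pairwise intersections. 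Since a cell of $\Del(S)$ is by definition a clique of the dual graph, the implication ``pairwise adjacent $\Rightarrow$ common Voronoi cell'' is exactly the nontrivial content of Lemma~\ref{lem:delaunay}, so invoking it as a standard nerve property begs the question. Moreover this implication can genuinely fail under the stated hypotheses: take the four sites $a=(1,2,-3)$, $b=(2,-3,1)$, $c=(-3,1,2)$ and $\0=(0,0,0)$ in $\cH\subset\RR^3$, a finite (hence super-discrete, $(r,\infty)$-system) configuration in general position. The point $(3,-1,-2)$ is equidistant from $a$ and $b$ ($d_\triangle=6$) and strictly closer to them than to $c$ and $\0$, so $\VR_S(a)\cap\VR_S(b)\neq\emptyset$, and by the cyclic symmetry all three outer pairs are adjacent; yet, writing $f_s(x)=\max_i(x_i-s_i)$, one has $\max\bigl(f_a(x),f_b(x),f_c(x)\bigr)\geq\max_i x_i+3=f_{\0}(x)+3$ for every $x\in\cH$, so any point equidistant from $a,b,c$ is strictly closer to $\0$ and $\VR_S(a)\cap\VR_S(b)\cap\VR_S(c)=\emptyset$. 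Correspondingly, the bounded subcomplex of $\conv\{t^{-s}\}+\RR^3_{\geq 0}$ contains the three edges but not the triangle on $\{a,b,c\}$. So the nerve-equals-clique step of your proof cannot be closed by any amount of bookkeeping; this is precisely where the clique-complex reading of $\Del(S)$ and the nerve of the Voronoi regions part ways.

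This is also not the route the paper takes. Its proof never passes to the Puiseux polyhedron: it works with the map $D_S$ from Section~\ref{sec:general} and Theorem~\ref{thm:V-cells}, identifying the cells of $\Del(S)$ with the sets $T$ for which the closure of the V-cell $V_T$ is a (nonempty) Voronoi cell, and reading off the order-reversing correspondence $T\mapsto\overline{V_T}=\bigcap_{a\in T}\VR_S(a)$; in effect it characterizes Delone cells through nonempty intersections of regions rather than through the facet-adjacency graph of the lifted polyhedron. If you want to keep your power-diagram framing, you must either prove the clique-to-cell implication directly (which the example above shows is impossible in the stated generality) or argue, as the paper does, at the level of nonempty Voronoi cells and V-cells.
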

\begin{proof}
  The results from Section~\ref{sec:general} apply, and we use the map $D_S:\cH\to S$ defined in \eqref{eq:voronoi-cell}.
  In view of Theorem~\ref{thm:V-cells} the set $T\subseteq S$ forms a cell of $\Del(S)$ if and only if the closure of $V_T$ is a Voronoi cell.
\end{proof}

\begin{figure}[th]
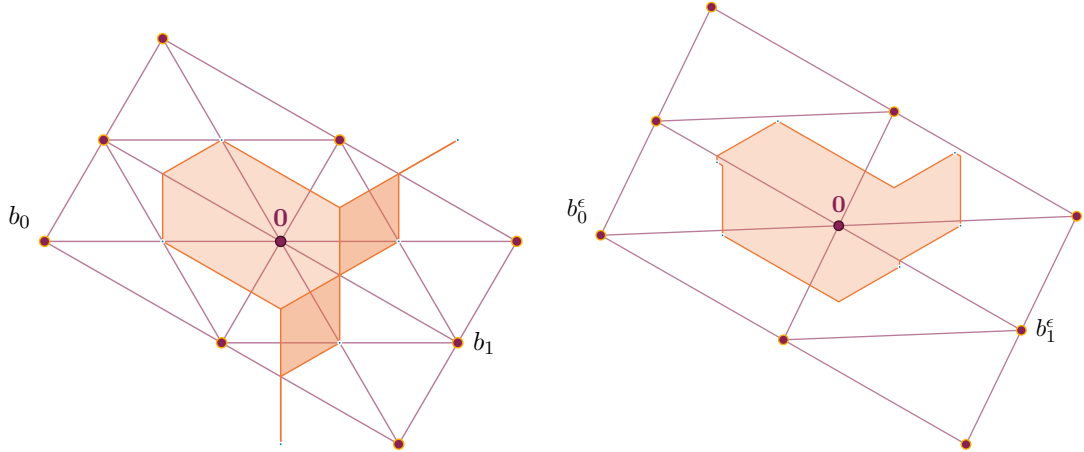

  \begin{tabular}{cc}
    \includestandalone[width=7cm]{OtherLattice2+Del} &
    \includestandalone[width=7cm]{latticePert} 
  \end{tabular}
  \caption{Left: Delone complex of nine sites arising from Example~\ref{exmp:lattice}.  Right: Delone complex of the generic perturbation by $\epsilon=1/10$}
  \label{fig:Del}
\end{figure}

We want to describe the Puiseux power diagram $\PD(\bm S)$.
In our case, according to \cite[\textsection 4.1]{PowDiag}, it is gotten from the graph of the function $\bm x \mapsto \max_{\bm s\in\bm S} (-\bm s^\top \bm x)$.
The minus sign comes from considering farthest point power diagrams.
After intersecting the epigraph with $\KK^n_{\geq 0}$, the dual is isomorphic to $\conv(\bm S)+\KK^n_{\geq 0}$.
Since Lemma~\ref{lemma:polyPR} gives the polyhedrality of the Voronoi cells in $\PD(\bm S)$, a result of Klee \cite[Corollary~5.14]{Klee:1959} implies that $\conv(\bm S)+\KK^n_{\geq 0}$ is quasi-polyhedral.
The faces of $\PD(\bm S)$ touching $\KK^n_{\geq 0}$ map to the unbounded faces of $\conv(\bm S)+\KK^n_{\geq 0}$.
This leads to the following result.

\begin{theorem}\label{th:delaunay}
  Let $S\subset\cH$ be super-discrete $(r,R)$-system in general position, for $0<r<R\leq\infty$.
  Then, for any $t$ sufficiently large, the Delone complex $\Del(S)$ is isomorphic to the bounded subcomplex of $\conv\smallSetOf{t^{-s}}{s\in S}+\RR^n_{\geq 0}$, which is an unbounded ordinary convex quasi-polyhedron in $\RR^n$.
\end{theorem}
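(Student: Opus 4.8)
The plan is to combine the structural results already established: Lemma~\ref{lem:delaunay} tells us $\Del(S)$ is dual to $\VD(S)$ as a poset; Theorem~\ref{th:PuiseuxLift} identifies $\VD(S)$ with the Puiseux farthest power diagram $\PD(\bm S)$ for the monomial lift $\bm S=\mu(S)$; and the discussion preceding the statement identifies the dual of $\PD(\bm S)$ (after intersecting the epigraph with $\KK^n_{\ge 0}$) with $\conv(\bm S)+\KK^n_{\ge 0}$. So the first step is to assemble these into the claim over $\KK$: the Delone complex $\Del(S)$ is isomorphic to the subcomplex of $\conv\smallSetOf{t^{-s}}{s\in S}+\KK^n_{\ge 0}$ consisting of its bounded faces. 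Here one must be careful about which faces correspond: the Voronoi \emph{regions} (nodes of the dual graph) correspond to the vertices of $\conv(\bm S)+\KK^n_{\ge 0}$, i.e.\ to elements of $\bm S$ that are nondominated, and a clique $T$ of the dual graph corresponds to a bounded face; the farthest-power/nondominated duality is exactly why it is the \emph{bounded} subcomplex rather than the whole boundary. I would spell out that the unbounded faces of $\conv(\bm S)+\KK^n_{\ge 0}$ are precisely the faces ``touching $\KK^n_{\ge 0}$'', which under the duality correspond to the non-compact Voronoi cells, and these are not cells of $\Del(S)$ because $\Del(S)$ only records adjacencies (finite intersections of finitely many regions), so only bounded faces survive.

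The second step is to pass from $\KK$ to $\RR$ via the parameter $t$. For this I would invoke Lemma~\ref{lem:PDS-vs-PDSt} together with the paragraph immediately before the theorem, which explains that the $(r,R)$-system hypothesis (only the lower bound $r$ is needed) yields a uniform bound on $t$ valid for every cell at once, so that $\PD(\bm S)$ and $\PD(\bm S(t))$ are poset-isomorphic for all $t$ large enough, and likewise the polyhedral complexes $\conv(\bm S)+\KK^n_{\ge 0}$ and $\conv\smallSetOf{t^{-s}}{s\in S}+\RR^n_{\ge 0}$ have the same face poset. Transporting the isomorphism from the first step along this specialization gives that $\Del(S)$ is isomorphic to the bounded subcomplex of $\conv\smallSetOf{t^{-s}}{s\in S}+\RR^n_{\ge 0}$ for $t$ sufficiently large.

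The final step is the quasi-polyhedrality assertion for the real object $\conv\smallSetOf{t^{-s}}{s\in S}+\RR^n_{\ge 0}$. Lemma~\ref{lemma:polyPR} gives that each cell of $\PD(\bm S)$ is polyhedral, hence (by the duality and the specialization) each face of $\conv\smallSetOf{t^{-s}}{s\in S}+\RR^n_{\ge 0}$ is an ordinary polytope or polyhedron; applying Klee's criterion \cite[Corollary~5.14]{Klee:1959} exactly as in the pre-theorem discussion shows the Minkowski sum is a quasi-polyhedron, and it is unbounded because of the summand $\RR^n_{\ge 0}$. I would also note $S$ nonempty (so the complex is nonempty) is inherited, and that in the finite case this recovers Lemma~\ref{lem:PDS-vs-PDSt} directly.

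The main obstacle I anticipate is the bookkeeping of \emph{which} faces match up under the chain of dualities — in particular making precise that the dual graph of $\VD(S)$ (codimension-$\le 1$ adjacencies of Voronoi regions) corresponds exactly to the edges among the bounded faces of $\conv(\bm S)+\KK^n_{\ge 0}$, and that the clique/flag structure of $\Del(S)$ matches the fact that faces of a (quasi-)polyhedron are determined by their vertex sets. One has to check that ``a set $T$ of regions has a common Voronoi cell'' is equivalent to ``the corresponding vertices of $\conv(\bm S)+\KK^n_{\ge 0}$ span a bounded face'', which uses Theorem~\ref{thm:V-cells} (via Lemma~\ref{lem:delaunay}) on the Voronoi side and the farthest-power-to-convex-hull duality on the other; the flag condition then comes for free because both the clique complex and the face lattice of a polyhedron are determined by the $1$-skeleton in the same way. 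Everything else — the existence of a uniform $t_0$, Klee's theorem, unboundedness — is quotable from the results already in the paper.
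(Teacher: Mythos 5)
Your proposal follows essentially the same route as the paper: it assembles Lemma~\ref{lem:delaunay}, Theorem~\ref{th:PuiseuxLift}, the farthest-power/convex-hull duality of \cite[\textsection 4.1]{PowDiag} (with unbounded faces coming from cells touching the boundary of the orthant), Klee's criterion via Lemma~\ref{lemma:polyPR}, and the specialization $\KK\to\RR$ through Lemma~\ref{lem:PDS-vs-PDSt} with the uniform bound on $t$ furnished by the $(r,\infty)$-system property, which is exactly the argument the paper gives in the paragraphs surrounding the theorem. The only slight imprecision is your stated reason for discarding unbounded faces (it is that they correspond to power cells meeting the boundary of $\KK^n_{\geq 0}$, hence not to Voronoi cells in $\cH$, rather than anything about finiteness of intersections), but this does not change the approach.
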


We call a super-discrete $(r,R)$-system $S$ \emph{sufficiently generic} if any two sites whose Voronoi regions intersect are in general position.
In \cite{RRlattice} the authors discuss generic perturbations of rational lattices.
These form our key examples, such as the following.

\begin{example}\label{exmp:perturb}
  We consider the lattice $L=L_2(2,1,1)$ from Example~\ref{exmp:lattice}, which is rational.
  Further, we pick a small rational $\epsilon>0$ to define the lattice $L^\epsilon$ with generators $b_0^\epsilon=(2+2\epsilon,-2-\epsilon,-\epsilon)$ and $b_1^\epsilon=(-1-\epsilon,2+2\epsilon,-1-\epsilon)$.
  The lattice points in $L^\epsilon$ are sufficiently generic but not in general position.
  The origin has eight adjacent Voronoi regions in $\VD(L)$.
  Its Voronoi region is depicted in Figure~\ref{fig:Del}, with and without perturbation.
  Locally, the situation is fully described by nine points in $L$ and their perturbations.
  The simplicial complexes $\Del(L)$ and $\Del(L^\epsilon)$ are three- and two-dimensional, respectively.
\end{example}

Finally, we turn to commutative algebra.
We view the Laurent polynomial ring $\FF[x_1^\pm,\dots,x_n^\pm]$ over some field $\FF$ as an algebra over the polynomial ring $\FF[x_1,\dots,x_n]$.
To be able to make the connection with asymmetric tropical Voronoi diagrams, we now additionally assume that the sites in $S$ have integral coordinates; i.e., $S\subset\cH\cap\ZZ^n$.
Then we obtain the \emph{Laurent monomial module}
\begin{equation}\label{eq:monomial-module}
  M(S) \ = \ \bigl[ \, x_1^{-s}\dots x_n^{-s_n}:s\in S \, \bigr] \enspace,
\end{equation}
which is the submodule of $\FF[x_1^\pm,\dots,x_n^\pm]$ spanned by the monomials $x_1^{-s}\dots x_n^{-s_n}$.
Monomial modules and their relevance to homological properties of commutative rings are the topic of \cite[\textsection 9.2]{CombCommAlg}. 
In our setting the bounded subcomplex of $\conv\smallSetOf{t^{-s}}{s\in S}+\RR^n_{\geq 0}$ is known as the \emph{hull complex} of $M(S)$; see \cite[\textsection 4.4]{CombCommAlg}.
This is known to be isomorphic to the Scarf complex, when we assume genericity; see \cite[Theorem~9.24]{CombCommAlg}.
\begin{corollary}\label{cor:scarf}
  Let $S\subset\cH\cap\ZZ^n$ be a subset of a lattice which is sufficiently generic.
  Then the Delone complex $\Del(S)$ is isomorphic as a simplicial complex to the hull complex of the Laurent monomial module $M(S)$.
\end{corollary}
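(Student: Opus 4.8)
The plan is to deduce the corollary from Theorem~\ref{th:delaunay}, after checking that $S$ meets the standing hypotheses of Section~\ref{sec:delaunay} and that the general-position assumption there can be relaxed to \enquote{sufficiently generic}. First I would verify the hypotheses. Since $S$ is a subset of a lattice $\Lambda\subset\ZZ^n$, each projection $\pi_i(S)\subseteq\ZZ$ is discrete, so $S$ is super-discrete; and any two distinct sites differ by a nonzero vector of $\Lambda$, so their asymmetric tropical distance is bounded below by a positive constant, which makes $S$ an $(r,\infty)$-system. Hence Theorem~\ref{th:polySuperDisc} applies and every region $\VR_S(s)$ is a tropical polyhedron, cut out by finitely many halfspaces $h(s,t)$; in particular only finitely many regions $\VR_S(t)$ meet a given $\VR_S(s)$.

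The substance of the proof is to see that \enquote{sufficiently generic} suffices in place of general position throughout Section~\ref{sec:general} and in Theorem~\ref{th:delaunay}. The general-position hypothesis enters there only through \emph{pairwise} statements --- Lemma~\ref{lem:bisector-generic}, Lemma~\ref{lem:puredim}, Lemma~\ref{lem:region-generic}, Remark~\ref{rmk:cardTgenpos}, Theorem~\ref{thm:V-cells}, Lemma~\ref{lem:delaunay} --- and it is genuinely needed only for pairs whose Voronoi regions intersect. Indeed, if $a,b\in S$ satisfy $\VR_S(a)\cap\VR_S(b)=\emptyset$, then $a$ and $b$ are non-adjacent in the dual graph of $\VD(S)$, so no simplex of $\Del(S)$ contains both, and correspondingly the Puiseux halfspaces $\bm h(\bm a,\bm b)$ and $\bm h(\bm b,\bm a)$ bound no common face of the part of $\PD(\bm S)$ contributing to the bounded subcomplex; every pair that does contribute has intersecting regions and hence is in general position by the sufficiently-generic hypothesis, so the pairwise lemmas apply to it unchanged. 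To turn this into a proof I would argue locally: fix $s\in S$ and let $N(s)$ be the finite set of its Voronoi neighbours; on the faces where it matters the configuration $\{s\}\cup N(s)$ is in general position, so Theorem~\ref{thm:V-cells} and the Puiseux lift of Theorem~\ref{th:PuiseuxLift} apply to it, and these local descriptions agree on overlaps to yield, exactly as in Theorem~\ref{th:delaunay}, that for all $t$ sufficiently large $\Del(S)$ is isomorphic as a simplicial complex to the bounded subcomplex of $\conv\SetOf{t^{-s}}{s\in S}+\RR^n_{\geq 0}$.

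To conclude, I would invoke the identification recalled just before the statement: since $S\subset\cH\cap\ZZ^n$, this bounded subcomplex is, by definition, the hull complex of the Laurent monomial module $M(S)$ of \eqref{eq:monomial-module}, which is the assertion; by \cite[Theorem~9.24]{CombCommAlg} the genericity moreover identifies it with the Scarf complex of $M(S)$, the intended bridge to commutative algebra. The main obstacle is the relaxation step: one must verify that the tropical halfspace arrangements appearing in the proof of Theorem~\ref{thm:V-cells} remain generic under \enquote{sufficiently generic} rather than full general position, and transfer the resulting pairwise picture to the global simplicial complex $\Del(S)$. Since $\Del(S)$ is a flag complex by Definition~\ref{def:delaunay}, it would suffice to match $1$-skeleta and to show that the hull complex of $M(S)$ is itself flag under the sufficiently-generic hypothesis; the former is a purely pairwise comparison covered by Lemma~\ref{lem:bisector-generic}, while the latter is precisely the local complete-intersection property that the general-position argument in Theorem~\ref{thm:V-cells} supplies and that must now be re-established from the weaker assumption, for instance by the local reduction sketched above.
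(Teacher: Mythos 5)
Your proposal is correct and takes essentially the same route as the paper: the paper's own proof is just the one-line observation that the defining halfspaces of any Voronoi region are induced by pairs of sites whose regions intersect, hence pairs in general position by sufficient genericity (general position being a pairwise condition here), so Theorem~\ref{th:PuiseuxLift} and then Theorem~\ref{th:delaunay} apply verbatim, which is exactly the local-reduction-to-Voronoi-neighbours argument you spell out. The \enquote{obstacle} you flag at the end is precisely what the paper dispatches with that single sentence, so the extra machinery you suggest (matching $1$-skeleta, flagness of the hull complex) is not needed.
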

\begin{proof}
  Theorem~\ref{th:PuiseuxLift} holds for $S$ as the defining halfspaces of any Voronoi region are induced by sites in general position. 
  So the result is a direct consequence of Theorem~\ref{th:delaunay}.
\end{proof}

\begin{figure}[th]
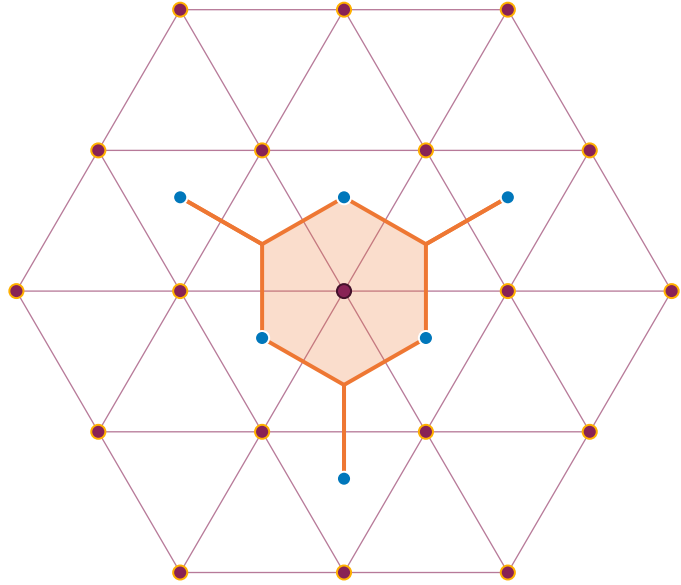
\centering
  \includestandalone[width=9cm]{DelA2} 
  \caption{Voronoi region $\VR_{A_2}(\0)$, whose six tropical vertices lie in the boundary of $-2\triangle$;  and Delone complex of $A_2$}
  \label{fig:DelA2}
\end{figure}

Connections between monomial ideals/modules and tropical geometry have been discussed in \cite{DevelinYu07}, \cite{ManjunathSturmfels:2013} and elsewhere.
In \cite{MonCon} it was shown that this connection can be exploited in multicriteria optimization \cite{Ehrgott:2005}.
Seeing the exponents of monomials as points in $\RR^n$ leads to geometric objects called \emph{staircases} in \cite[\textsection 3]{CombCommAlg} which are tropical cones in the view of \cite{MonCon}.
In the homogeneous case, Amini and Manjunath link these cones to tropical Voronoi diagrams in \cite[\textsection 4.3]{RRlattice}.
More exactly, they look at the epigraph of the function $\frac{1}{n}\min_{s\in S}d_\triangle(\cdot,s)$, which encodes the distance to the closest site.
With the terminology from \cite{MonCon}, the epigraph is the tropical monomial cone generated by $-S$ and its boundary projects onto $\VD(S)$.

We close this paper with the analysis of a classical example, which also occurs in \cite[Corollary~28]{ManjunathSturmfels:2013}.
\begin{example}
  The root lattice $A_{n-1}$ is generated by the vectors $e_i-e_j$, for $i,j\in[n]$; it lies in $\cH$.
  The asymmetric tropical Voronoi diagrams of $A_{n-1}$ and its sublattices are studied in \cite[\textsection\textsection 4.2, 4.3]{RRlattice}.

  The tropical vertices of the Voronoi region $\VR_{A_{n-1}}(\0)$ are points with integral coordinates in the dilated simplex $-(n-1)\triangle$.
  The integer points in an intersection $\VR_{A_{n-1}}(x)\cap\VR_{A_{n-1}}(y)$ arise from the intersection of two translated copies of $-(n-1)\triangle$.
  If non-empty, that intersection is of the form $z-r\triangle$ for some $z\in\ZZ^n/\ZZ\1$, so $\dim(\VR_{A_{n-1}}(x)\cap\VR_{A_{n-1}}(y))=r$.
  We have $r=n-2$ if and only if $y=x+e_i-e_j$ for some distinct indices $i,j\in[n]$.
  This characterizes the dual graph of the Voronoi diagram $\VD(A_{n-1})$, from which we can derive the Delone complex.
  
  We conclude that $\Del(A_{n-1})$ is isomorphic to the standard triangulation of an apartment in the Bruhat--Tits building of the group $\SL_n(\FF)$ over a field $\FF$ with a discrete valuation; see \cite[p.753f]{ManjunathSturmfels:2013} and \cite[Observation~10.82]{ETC} for the connection to tropical convexity.
  In this way $\Del(A_{n-1})$ may also be seen as a geometric realization of the affine Coxeter group of type $\widetilde{A}_{n-1}$.
  An example for $\FF$ is given by the ordinary (dual) Laurent series with complex coefficients, which forms a subfield of the field of generalized dual Puiseux $\hahnseries{\CC}{t}^*$.

  The computation above also shows that $A_{n-1}$ is not sufficiently generic.
  Nonetheless, its Delone complex is pure of dimension $n-1$.
\end{example}

\printbibliography

\end{document}